\documentclass[12pt]{article}
\usepackage[utf8]{inputenc}
\usepackage{amsmath, amssymb, amsthm, amscd}
\usepackage[english]{babel}
\usepackage{geometry}
\usepackage{color}
\usepackage{mathrsfs}

\newtheorem{theorem}{Theorem}[section]
\newtheorem{lemma}[theorem]{Lemma}
\newtheorem{proposition}[theorem]{Proposition}
\newtheorem{corollary}[theorem]{Corollary}
\theoremstyle{definition}
\newtheorem{definition}[theorem]{Definition}

\newtheorem{example}[theorem]{Example}

\title{Affine Anosov Maps on $\mathbb{R}^n$: Classification, Index Spectrum, and Stability at Infinity}

\author{Z. Li, A. Rojas, and S. Roma\~na}

\date{}

\begin{document}

\maketitle

\begin{abstract}
For $n\ge2$, we classify the affine diffeomorphisms
$f_{A,v}(x)=Ax+v$ on $\mathbb R^n$ that admit a complete Riemannian metric with respect to which they are Anosov. Such a metric exists if and only if $A$ is hyperbolic or $f_{A,v}$ has no fixed point, equivalently $v\notin\operatorname{Im}(I-A)$. In the latter case, $f_{A,v}$ is smoothly conjugate to a translation when $\det A>0$ and to White's map times the identity when $\det A<0$. We also determine the possible stable indices. In the hyperbolic case the index is determined by the stable spectrum of $A$, whereas a map with no fixed point admits complete Anosov metrics of every stable index from $1$ to $n-1$. Along the $1$-eigenspace, every such metric must exhibit exponential growth of vector norms along one of the two half orbits. We then determine the interior and boundary of the Anosov-realizable locus in the affine parameter space and describe the corresponding change of the index spectrum near regular drift parameters. Finally, we show that Anosov-realizability is not open in the two-sided weak $C^1_{\mathrm{loc}}$ topology but is open in the two-sided strong Whitney $C^1$ topology.
\end{abstract}

\section{Introduction}

Anosov diffeomorphisms \cite{anosov} form a basic class of uniformly hyperbolic systems. On compact manifolds, Anosov diffeomorphisms are structurally stable and exhibit uniform hyperbolic behavior. Their general classification remains a major problem, although strong classification results are known in important algebraic settings (see \cite{hirsch, Katok, franks, smale} for more details). On noncompact manifolds, compactness no longer provides uniform geometric control, and several standard arguments must be reformulated.

White \cite{white} (see also \cite{Mats, Mendes} and the introduction of \cite{BenOvadia} for more details) gave a fundamental example on $\mathbb{R}^2$:
\[
T(x,y)=(x+1,-y).
\]
This map is not hyperbolic in the Euclidean metric, but becomes Anosov after a suitable change of Riemannian metric. White's example shows that affine maps with eigenvalues on the unit circle can still be Anosov, provided there is a non-zero translation in the $1$-direction. Throughout the discussion of affine maps on $\mathbb R^n$, we assume
$n\geq2$.

We classify the affine diffeomorphisms
\[
f_{A,v}(x)=Ax+v,\qquad A\in GL(n,\mathbb{R}),\quad v\in\mathbb{R}^n,
\]
on $\mathbb{R}^n$ that admit a complete Riemannian metric making them Anosov.

The criterion involves both the spectrum of $A$ and the solvability of the equation $Ax+v=x$. The main result is:

\begin{theorem}[Main Classification]\label{thm:mainresult}
Let $f_{A,v}(x)=Ax+v$ be an affine diffeomorphism of $\mathbb{R}^n$ for $n\geq 2$, with $A\in GL(n,\mathbb{R})$. Then $f_{A,v}$ admits a complete Riemannian metric making it Anosov if and only if either
\begin{enumerate}
\item[\emph{\text{1.}}] $A$ is hyperbolic, i.e., $\operatorname{Spec}(A)\cap \mathbb{S}^1=\varnothing$; or
\item[\emph{\text{2.}}] $f_{A,v}$ has no fixed point, equivalently $v\notin \operatorname{Im}(I-A)$.
\end{enumerate}
\end{theorem}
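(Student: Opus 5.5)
Both directions reduce to linear algebra after a normal form. For sufficiency in case 1, the Euclidean metric already works: the splitting $\mathbb{R}^n=E^s_A\oplus E^u_A$ into generalized eigenspaces for eigenvalues inside/outside the unit circle is invariant under $Df=A$. An adapted inner product (Mather's trick) makes $\|A|_{E^s}\|<1$ and $\|A^{-1}|_{E^u}\|<1$, and a constant metric on $\mathbb{R}^n$ is complete.

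For case 2, I will first conjugate by translations and a linear change of coordinates. Decompose $\mathbb{R}^n=V_1\oplus W$, where $V_1$ is the generalized $1$-eigenspace of $A$ and $W$ is the sum of the other generalized eigenspaces. Then $I-A$ is invertible on $W$, so translating the origin removes the $W$-component of $v$. On $V_1$, write $A=I+N$ with $N$ nilpotent. The condition $v\notin\operatorname{Im}(I-A)$ means $v_1\notin\operatorname{Im}N$. Next I will find coordinates in which $f$ is smoothly conjugate to $(t,y)\mapsto(t+1,\,\Phi(y))$ on $\mathbb{R}\times\mathbb{R}^{n-1}$ with $\Phi$ linear. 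The plan is to pick a linear functional $\ell$ on $V_1$ with $\ell\circ N=0$ and $\ell(v_1)=1$; this is possible because $v_1\notin\operatorname{Im}N=\ker(\ell)$ for a suitable $\ell$ vanishing on $\operatorname{Im}N$. Then $t=\ell(x)$ satisfies $t\circ f=t+1$. A fibered change of variables $y\mapsto y-c(t)$ with $c$ polynomial in $t$ should straighten the remaining unipotent part into a $t$-independent linear map. I expect this step to require an explicit Jordan-block computation.

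Given the conjugate form $(t,y)\mapsto(t+1,By)$, I will build the metric in the style of White. Choose a block metric $g=dt^2+\langle\cdot,\cdot\rangle_{t}$ together with a splitting $y=(y_s,y_u)$ of $\mathbb{R}^{n-1}$, taken to be any nontrivial splitting if we only need existence. Take $\langle\cdot,\cdot\rangle_t=e^{-2\lambda t}|y_s|^2_{Q(t)}+e^{2\lambda t}|y_u|^2_{Q(t)}$, where $Q(t)$ interpolates to absorb the (at most polynomial) growth of $B^k$. The exponential weight $e^{\mp\lambda t}$ gains a factor $e^{-\lambda}$ per step and dominates the polynomial growth of $B$. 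This makes $\{t\}\times\mathbb{R}^{n-1}$ split uniformly. The $t$-direction also needs to be hyperbolic, so I will put it into the unstable bundle with a tilted weight, e.g. metric factor $e^{2\lambda t}$ in $dt$, which gives $\|Df\,\partial_t\|=e^{\lambda}\|\partial_t\|$. Completeness holds because $\int_{-\infty}^0 e^{\lambda t}\,dt<\infty$ would fail. So I will instead use $h(t)\,dt^2$ with $h(t)\ge1$ exponential on one side only, matching the abstract's claim about one half-orbit. I expect this to be the main obstacle: reconciling uniform expansion in the $t$-direction with completeness. The intended fix is to use a non-diagonal invariant line field mixing $\partial_t$ with $y_u$, as in White's example. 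There $B=-1$ on the fibre and the invariant unstable direction is a tilted field, and the metric is complete because a vertical fibre weight $e^{2\lambda t}$ plus $dt^2$ is a warped product over a complete base.

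For necessity, suppose $A$ is not hyperbolic and $f$ has a fixed point $p$. At $p$, $Df_p=A$, and the Anosov splitting $T_p\mathbb{R}^n=E^s\oplus E^u$ must be $A$-invariant with $\|A^k|_{E^s}\|_{g_p}\to0$ and $\|A^{-k}|_{E^u}\|_{g_p}\to0$. Since $g_p$ is a fixed inner product, this forces every eigenvalue of $A|_{E^s}$ to lie inside the unit circle and every eigenvalue of $A|_{E^u}$ to lie outside it. Hence $A$ is hyperbolic, a contradiction.
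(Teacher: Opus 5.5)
Your necessity argument and the hyperbolic case agree with the paper. The first half of your reduction in case 2 also matches the first step of Proposition~\ref{pro:smoconju}: a functional $\ell$ with $\ell\circ A=\ell$ and $\ell(v)\neq0$, then a polynomial fibre shift solving $c(t+1)=Bc(t)+q(t)$.

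\textbf{The gap.} The missing piece is the complete Anosov metric for $(t,y)\mapsto(t+1,By)$, which is the real content of sufficiency, and the class of metrics you propose cannot work. Since $t\circ f=t+1$, every tangent vector satisfies $dt(Df\,X)=dt(X)$. Suppose $g\ge c\,dt^2$ for some $c>0$. This holds for $dt^2+\langle\cdot,\cdot\rangle_t$, for $h(t)\,dt^2+\cdots$ with $h\ge1$, and for the warped product $dt^2+e^{2\lambda t}|y|^2$ you invoke at the end, whatever tilted line fields you choose. Then for $X\in E^s$ you get $c\,|dt(X)|=c\,|dt(Df^mX)|\le\|Df^mX\|_g\to0$, so $E^s\subset\ker dt$. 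Likewise $E^u\subset\ker dt$ using $m\to-\infty$, which contradicts $E^s\oplus E^u=T\mathbb R^n$.

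Letting the $dt$-coefficient decay instead destroys completeness. If $\partial_t\in E^u$ along the invariant line $\{y=0\}$, then $Df\,\partial_t=\partial_t$ and the unstable estimate give $\|\partial_t\|_{(t-m,0)}\le C\lambda^m\|\partial_t\|_{(t,0)}$. Hence the ray $\{(t,0):t\le0\}$ has finite length, and the same holds symmetrically for $E^s$ with the forward ray. So the drift direction can lie in neither bundle, and $|dt|_g$ must be unbounded. No tuning of weights in a block form $h(t)\,dt^2+\langle\cdot,\cdot\rangle_t$ gets around this.

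\textbf{What is needed.} You need a genuinely two-dimensional complete metric on a plane containing the drift direction, with both $E^s$ and $E^u$ transverse to $\partial_t$. This is White's construction, which the paper takes as given for $W(t,y)=(t+1,-y)$ and transfers to the translation via $W^2$ and $S(t,y)=(2t,y)$. To use it you also need a reduction of $B$ that you do not have. The paper applies the moving frame $H_2(t,z)=(t,C(t)z)$ with $C(t+1)=RC(t)B^{-1}$, which conjugates $(t+1,Bz)$ to $T_\pm$. Only then are the remaining coordinates $Df$-invariant, so they can be placed in $E^u$ with weights $e^{2t}d\xi_j^2$, and completeness follows from Lemma~\ref{lem:completeness}.

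\textbf{Smaller errors.} Two further errors point the same way. First, $B=A|_{\ker\ell}$ contains $A|_W$, so $B^k$ need not grow polynomially; take $A=\operatorname{diag}(1,2)$. Second, a constant splitting $y=(y_s,y_u)$ is $Df$-invariant only if $B$ preserves it, which fails for instance when $n=3$ and $B$ is a rotation. These could be repaired with an equivariant family of inner products $B^TQ(t+1)B=Q(t)$ and equivariant subspace paths. That repair does not touch the main gap.
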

If $A$ is hyperbolic, then every affine perturbation $x\mapsto Ax+v$ is Anosov with respect to an adapted Euclidean metric. If $A$ is not hyperbolic and $f_{A,v}$ has a fixed point, then the derivative at this fixed point has an eigenvalue on the unit circle, which is impossible for an Anosov fixed point. When the map has no fixed point, it is characterized by $v\notin \operatorname{Im}(I-A)$. A left $1$-eigenvector gives a height coordinate with nonzero drift, and a smooth moving frame reduces the transverse linear map to its orientation class. \\
In the case of item 2, the map admits complete Anosov metrics with every stable index in $\{1,\ldots,n-1\}$; see Theorem~\ref{thm:index}. Metrics realizing different indices are not uniformly equivalent.

Our second main result describes the local geometry of the \emph{Anosov-realizable} locus in the affine parameter space.
    Let $\mathcal{P}_n:=GL(n, \mathbb{R})\times \mathbb R^n$ be the parameter space of affine diffeomorphisms of $\mathbb R^n$. Define the Anosov-realizable locus by
    $$\mathcal{A}_n :=\{(A, v)\in \mathcal{P}_n: f_{A, v} \textrm{ is Anosov with respect to some complete Riemannian metric}\},$$
    and the hyperbolic locus by
    $$\mathcal{H}_n:=\{(A, v)\in \mathcal{P}_n: A \textrm{ is hyperbolic}\}.$$
    Note that Theorem \ref{thm:mainresult} states that $\mathcal{A}_n\setminus \mathcal{H}_n\neq \emptyset$. We use the following terminology. Let $\mathbb{S}^1:=\{z\in \mathbb{C}:|z|=1\}$. A pair $(A, v)\in \mathcal{P}_n$ is called a \emph{regular drift pair} if: 
    \begin{enumerate}
        \item $1$ is the only eigenvalue of $A$ on the unit circle, that is, $\operatorname{Spec}(A)\cap \mathbb{S}^1=\{1\}$;
        \item The eigenvalue $1$ is algebraically simple;
        \item $v\notin \operatorname{Im}(I-A)$.
    \end{enumerate}
    The set of regular drift pairs is denoted by $\mathcal{D}^{reg}_n$.
    
The condition $v\notin \operatorname{Im}(I-A)$ means that the affine drift class
$$[v]_A\in \operatorname{coker}(I-A)=\mathbb R^n/ \operatorname{Im}(I-A)$$
is nonzero. Equivalently, $f_{A, v}$ has no fixed point.

\begin{theorem}[Affine parameter stratification]\label{Aff Par Str}
The set $\mathcal{A}_n$ is dense in $\mathcal{P}_n$ and 
$$\operatorname{Int}_{\mathcal{P}_n}(\mathcal{A}_n)=\mathcal{H}_n\cup \mathcal{D}^{reg}_n.$$
Equivalently, a nonhyperbolic Anosov-realizable affine map is an interior point of $\mathcal{A}_n$ if and only if $1$ is an algebraically simple eigenvalue, $1$ is the only eigenvalue of $A$ on the unit circle, and the map has no fixed point. Therefore
$$\partial_{\mathcal{P}_n}\mathcal{A}_n=\mathcal{P}_n\setminus (\mathcal{H}_n \cup \mathcal{D}^{reg}_n).$$
\end{theorem}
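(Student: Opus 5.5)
By Theorem~\ref{thm:mainresult}, $\mathcal{A}_n=\mathcal{H}_n\cup\mathcal{F}_n$, where $\mathcal{F}_n:=\{(A,v): v\notin\operatorname{Im}(I-A)\}$. The whole statement then becomes a question about these two explicit semialgebraic conditions on $\mathcal{P}_n$, and I will argue entirely in those terms.

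\emph{Density.} Fix $(A,v)\in\mathcal{P}_n$. The spectrum of $A+\varepsilon I$ meets $\mathbb{S}^1$ for only finitely many small $\varepsilon$, since each eigenvalue $\lambda_j+\varepsilon$ hits the circle for at most two real values of $\varepsilon$. So $A+\varepsilon I$ is hyperbolic for arbitrarily small $\varepsilon$, and $\mathcal{H}_n$, hence $\mathcal{A}_n$, is dense.

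\emph{$\mathcal{H}_n\cup\mathcal{D}^{reg}_n\subseteq\operatorname{Int}\mathcal{A}_n$.} $\mathcal{H}_n$ is open by continuity of the spectrum. Now take $(A,v)\in\mathcal{D}^{reg}_n$. All other eigenvalues of $A$ stay at a positive distance from $\mathbb{S}^1$, and $1$ is simple. Hence for $(A',v')$ near $(A,v)$ the spectrum of $A'$ has exactly one eigenvalue $\lambda(A')$ near $1$; it is real (being simple and close to a real number), and all other eigenvalues stay off $\mathbb{S}^1$. If $\lambda(A')\neq 1$, then $A'$ is hyperbolic and $(A',v')\in\mathcal{A}_n$. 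If $\lambda(A')=1$, then $1$ is simple for $A'$, so $\operatorname{Im}(I-A')$ is a hyperplane. Write it as $\ker\ell_{A'}$, where $\ell_{A'}$ is a left $1$-eigenvector of $A'$. The spectral (Riesz) projection gives a choice of $\ell_{A'}$ depending continuously on $A'$ with $\ell_{A'}\to\ell_A$. Since $\ell_A(v)\neq0$, we get $\ell_{A'}(v')\neq0$ for $(A',v')$ close enough. So $v'\notin\operatorname{Im}(I-A')$ and $(A',v')\in\mathcal{A}_n$.

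\emph{Reverse inclusion.} Take $(A,v)\in\mathcal{A}_n\setminus(\mathcal{H}_n\cup\mathcal{D}^{reg}_n)$; it lies in $\mathcal{F}_n$ and $A$ is non-hyperbolic. Since $v\notin\operatorname{Im}(I-A)$, the matrix $I-A$ is singular, so $1\in\operatorname{Spec}A$. Failure of regularity leaves two cases, and in each I will construct nearby pairs $(A',v')$ that are non-hyperbolic yet have a fixed point. The key observation is that if $1\notin\operatorname{Spec}A'$, then $\operatorname{Im}(I-A')=\mathbb{R}^n$, so $f_{A',v'}$ has a fixed point, and if in addition $A'$ is non-hyperbolic then $(A',v')\notin\mathcal{A}_n$.

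Case (a): $A$ has an eigenvalue $\mu\neq1$ on $\mathbb{S}^1$. Work in real Jordan form $A=P\,\mathrm{diag}(J_1,J_\mu,\dots)P^{-1}$. Replace the block $J_1$ by $(1+\varepsilon)J_1$, leaving the $\mu$-block (or the $\mu,\bar\mu$ rotation block) untouched. Then $\mu$ remains on $\mathbb{S}^1$, and for small $\varepsilon\neq0$ the number $1$ is no longer an eigenvalue. This only works when $\mu\neq 1$, which is the case here.

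Case (b): $1$ is the only eigenvalue of $A$ on $\mathbb{S}^1$, but it has algebraic multiplicity $m\geq2$. Perturb the generalized $1$-eigenspace block to an $m\times m$ real block with spectrum $\{e^{i\theta},e^{-i\theta}\}\cup\{1+\varepsilon\}$ for small $\theta,\varepsilon>0$. For $m=2$ this is the rotation $R_\theta$. For larger $m$ it is a rotation block together with $m-2$ real eigenvalues $1+\varepsilon$ plus small nilpotent parts, all of which can be made arbitrarily close to the original Jordan block by continuity of the spectrum as a function of the entries. The resulting $A'$ has eigenvalues on $\mathbb{S}^1$ but none equal to $1$.

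In both cases $(A',v)\to(A,v)$ with $(A',v)\notin\mathcal{A}_n$, so $(A,v)$ is not interior. Points outside $\mathcal{A}_n$ are trivially not interior. Together this gives $\operatorname{Int}\mathcal{A}_n=\mathcal{H}_n\cup\mathcal{D}^{reg}_n$. Finally, $\partial\mathcal{A}_n=\overline{\mathcal{A}_n}\setminus\operatorname{Int}\mathcal{A}_n=\mathcal{P}_n\setminus(\mathcal{H}_n\cup\mathcal{D}^{reg}_n)$ by density.

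The main obstacle is case (b) with large multiplicity: one must check that a Jordan block at $1$ can be approximated by real matrices with a conjugate pair on the circle and no eigenvalue $1$. I would handle this by first perturbing the block to a diagonalizable one with distinct real eigenvalues near $1$, and then replacing two of them by the rotation $R_\theta$.
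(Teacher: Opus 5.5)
Your overall route is the paper's. You reduce via Theorem~\ref{thm:mainresult} to the two explicit conditions and get density by a spectral shift ($A+\varepsilon I$ instead of the paper's $rA$, equally good). You get interiority of $\mathcal D^{reg}_n$ from a continuously varying left eigenvector; the Riesz projection, e.g.\ $\ell_{A'}=\ell_A\circ P_{A'}$, is a clean way to supply what the paper asserts. You rule out interiority elsewhere by finding nearby $A'$ with $1\notin\operatorname{Spec}A'$ but $\operatorname{Spec}A'\cap\mathbb S^1\neq\varnothing$. Case (a) is fine, and so is case (b) when the eigenvalue $1$ is semisimple.

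The gap is case (b) when $1$ is not semisimple, i.e.\ the generalized $1$-eigenspace carries a Jordan block $J_r(1)$ with $r\ge 2$. Your recipe does not approximate $A$ there, for three reasons:
\begin{enumerate}
\item $R_\theta\to I_2$, not $J_2(1)$, so ``for $m=2$ this is the rotation $R_\theta$'' fails when the $1$-block is $J_2(1)$.
\item A rotation block plus entries $1+\varepsilon$ plus \emph{small} nilpotent parts is close to $I_m$, whereas $J_m(1)=I_m+N$ with $N$ not small.
\item Continuity of the spectrum runs the wrong way. It constrains the spectra of nearby matrices; it does not produce a nearby matrix with a prescribed spectrum.
\end{enumerate}

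Your proposed fix (diagonalize first, then swap two eigenvalues for $R_\theta$) does not work as a two-step procedure. A real matrix $D'$ with distinct real eigenvalues has a whole neighborhood of matrices with only real eigenvalues. So the swap is a perturbation of $D'$ of size bounded below by a constant depending on $D'$. The reduction is therefore circular unless the two perturbations are chosen jointly, which you do not specify.

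Concretely, take $D'=\begin{pmatrix}1+\varepsilon_1&1\\0&1+\varepsilon_2\end{pmatrix}$ with eigenvector matrix $Q=\begin{pmatrix}1&1\\0&d\end{pmatrix}$, $d=\varepsilon_2-\varepsilon_1$. Then $QR_\theta Q^{-1}=\begin{pmatrix}\cos\theta+\sin\theta&-2\sin\theta/d\\ d\sin\theta&\cos\theta-\sin\theta\end{pmatrix}$. With $d$ fixed this tends to $I_2$ as $\theta\to0$, at distance $1$ from $J_2(1)$; it is close to $J_2(1)$ only when $d\approx-2\sin\theta$.

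The missing ingredient is an explicit real $2\times2$ matrix near $J_2(1)$ with eigenvalues $e^{\pm i\theta}$. The paper uses $M_\theta=\begin{pmatrix}1&1\\-2(1-\cos\theta)&2\cos\theta-1\end{pmatrix}$, which has trace $2\cos\theta$ and determinant $1$; $(1+\theta^2)^{-1/2}\begin{pmatrix}1&1\\-\theta^2&1\end{pmatrix}$ also works. Then:
\begin{enumerate}
\item Write $J_r(1)=\begin{pmatrix}J_2(1)&C\\0&J_{r-2}(1)\end{pmatrix}$.
\item Replace $J_2(1)$ by $M_\theta$ and $J_{r-2}(1)$ by $J_{r-2}(1+\delta)$, keeping $C$.
\item Shift every other $1$-block to $J_s(1+\delta_s)$.
\item Read off the spectrum from the block-triangular form.
\end{enumerate}
With that lemma in place, the rest of your argument goes through.
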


On a compact manifold, the set of Anosov diffeomorphisms is $C^1$-open \cite{Katok}. On a noncompact manifold, however, the corresponding persistence question depends essentially on the topology used to control perturbations at infinity. Compact-open topologies do not detect dynamical obstructions escaping to infinity, whereas the strong Whitney topology permits perturbation sizes that may vary from point to point. The two topologies give different persistence properties for complete Anosov structures on $\mathbb{R}^n$. \\
Firstly, we define
$$\mathscr{D}_n:=\{f\in \operatorname{Diff}^1(\mathbb R^n): f \textrm{ admits a complete Anosov metric}\}.$$

We consider the two-sided weak $C^1_{\mathrm{loc}}$ and strong
Whitney $C^1$ topologies. These definitions are recalled in
Section~\ref{Stability at Infinity}. Thus, our third result shows that Anosov-realizable is not open in the weak $C^1_{loc}$ topology. 

\begin{theorem}[Weak $C^1_{\mathrm{loc}}$ instability]
\label{thm:weak}
Let $f_{A,v}$ be an affine diffeomorphism of $\mathbb R^n$ which has no fixed point. Then $f_{A,v}$ is a boundary point of $\mathscr D_n$ in the two-sided weak $C^1_{\mathrm{loc}}$ topology.

More precisely, there exist smooth diffeomorphisms $h_\varepsilon\notin\mathscr D_n$ such that
$$ h_\varepsilon\longrightarrow f_{A,v} $$
together with their inverses in $C^\infty$ on compact subsets as
$\varepsilon\to 0$.
\end{theorem}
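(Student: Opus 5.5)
The plan is to plant, far from the origin, a fixed point whose derivative has an eigenvalue of modulus one. This is a pointwise obstruction that no complete Riemannian metric can remove. Fix $\varepsilon>0$ and set $R=1/\varepsilon$. I will build $h_\varepsilon=\varphi_\varepsilon\circ f_{A,v}$, where $\varphi_\varepsilon$ is a $C^\infty$ diffeomorphism of $\mathbb R^n$ that is the identity on a neighbourhood of $\overline{B(0,R)}\cup f_{A,v}(\overline{B(0,R)})$.

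For such a $\varphi_\varepsilon$ we get $h_\varepsilon=f_{A,v}$ on $B(0,R)$. We also get $h_\varepsilon^{-1}=f_{A,v}^{-1}\circ\varphi_\varepsilon^{-1}=f_{A,v}^{-1}$ on $B(0,R)$, since $\varphi_\varepsilon^{-1}$ is the identity there too. So on every compact set, $h_\varepsilon$ and $h_\varepsilon^{-1}$ eventually coincide with $f_{A,v}$ and $f_{A,v}^{-1}$. This gives convergence in $C^\infty$ on compacts in the two-sided sense, which is stronger than needed.

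\emph{Construction of $\varphi_\varepsilon$.} Let $K=\overline{B(0,R)}\cup f_{A,v}(\overline{B(0,R)})$, which is compact. Choose a point $p$ with $p\notin K$ and $f_{A,v}(p)\notin K$; since $f_{A,v}$ is proper, this holds for $|p|$ large. Because $n\ge2$, the set $\mathbb R^n\setminus K'$ is connected, where $K'$ is a slightly larger closed ball containing $K$. So there is an embedded smooth path $\gamma$ from $q:=f_{A,v}(p)$ to $p$ that avoids $K'$.

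Integrating a vector field supported in a thin tubular neighbourhood $U$ of $\gamma$ gives a compactly supported diffeomorphism $\psi$, supported in $U$, with $\psi(q)=p$. Post-composing with a diffeomorphism supported in a small ball around $p$, I then prescribe the derivative at $q$ to be any matrix $M$ in the same orientation class as $D\psi(q)$. That class is $GL^+(n,\mathbb R)$, because a compactly supported diffeomorphism of $\mathbb R^n$ preserves orientation.

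I choose $M=BA^{-1}$ with $B=\operatorname{diag}(\operatorname{sgn}\det A,1,\dots,1)$. Then $\det M>0$, and $B$ has $1$ as an eigenvalue because $n\ge2$. The resulting $\varphi_\varepsilon$ is supported in $U$, which is disjoint from $K$. It satisfies $h_\varepsilon(p)=\varphi_\varepsilon(q)=p$ and $Dh_\varepsilon(p)=MA=B$.

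\emph{Obstruction.} Suppose $h_\varepsilon$ were Anosov for some complete metric $g$. At the fixed point $p$ we would have a $Dh_\varepsilon(p)$-invariant splitting $T_p\mathbb R^n=E^s\oplus E^u$ with $\|Dh_\varepsilon^k|_{E^s}\|_g\le C\lambda^k$ and $\|Dh_\varepsilon^{-k}|_{E^u}\|_g\le C\lambda^k$, where $\lambda<1$. These estimates concern a single tangent space, and all norms on it are equivalent, so the linear map $B$ would have to be hyperbolic. But the eigenvector $w$ of $B$ with eigenvalue $1$ has $B^kw=w$ for every $k$. Decomposing $w$ into its $E^s$ and $E^u$ components and applying the two estimates gives a contradiction. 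Hence $h_\varepsilon\notin\mathscr D_n$, and $f_{A,v}$ lies in the closure of the complement of $\mathscr D_n$.

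\emph{Completing the statement.} Since $f_{A,v}$ has no fixed point, Theorem~\ref{thm:mainresult} gives $f_{A,v}\in\mathscr D_n$. Together with the previous step, $f_{A,v}$ is a boundary point of $\mathscr D_n$.

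\emph{Main obstacle.} The only delicate step is the construction of $\varphi_\varepsilon$: moving $q$ to $p$ with a prescribed derivative while keeping the support away from $K$. The connectedness of the complement of a ball for $n\ge2$ handles the support. The orientation issue when $\det A<0$ is handled by the factor $B$, which still has $1$ as an eigenvalue.
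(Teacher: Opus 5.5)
Your proof is correct, but it takes a different route from the paper.

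\textbf{The paper's route.} The paper conjugates $f_{A,v}$ by the normal-form diffeomorphism $H$ of Proposition~\ref{pro:smoconju} to $T_\sigma(t,z)=(t+1,R_\sigma z)$. It then perturbs the drift coordinate affinely to $T_{\sigma,\varepsilon}(t,z)=((1+\varepsilon)t+1,R_\sigma z)$. This map has the fixed point $(-1/\varepsilon,0)$, which escapes to infinity and has derivative $(1+\varepsilon)\oplus R_\sigma$ with unimodular eigenvalues. The paper sets $h_\varepsilon=H^{-1}\circ T_{\sigma,\varepsilon}\circ H$, and convergence on compacta comes from the explicit difference $(\varepsilon t,0)$.

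\textbf{Your route.} You do local surgery instead: a compactly supported diffeomorphism, placed far out, sends $f_{A,v}(p)$ to $p$ with prescribed derivative $BA^{-1}$. This rests on two standard facts: the homogeneity lemma in the connected set $\mathbb R^n\setminus K'$ for $n\ge 2$, and the realizability of any $G\in GL^+(n,\mathbb R)$ as the derivative at a fixed point of a diffeomorphism supported in a small ball.

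\textbf{What your approach buys.} You never use the normal form. The fixed-point-free hypothesis enters only to guarantee $f_{A,v}\in\mathscr D_n$. Replacing $A$ by $Df(p)$ in $M$, the same construction works for any $C^1$ diffeomorphism of $\mathbb R^n$. So you in fact show that $\mathscr D_n$ has empty interior in the two-sided weak $C^1_{\mathrm{loc}}$ topology, including at hyperbolic affine maps. Moreover, your $h_\varepsilon$ and $h_\varepsilon^{-1}$ coincide exactly with $f_{A,v}$ and $f_{A,v}^{-1}$ on $B(0,1/\varepsilon)$.

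\textbf{What the paper's approach buys.} It is fully explicit and stays within smooth conjugates of affine maps. In normal-form coordinates it exhibits the same mechanism as the boundary part of Theorem~\ref{Aff Par Str}: turning the drift eigenvalue $1$ into $1+\varepsilon$ creates a fixed point whose transverse eigenvalues remain on the unit circle. The price is dependence on the normal form.

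One small precision: you need $p, f_{A,v}(p)\notin K'$, not merely $\notin K$, in order to join them inside $\mathbb R^n\setminus K'$. This holds automatically once $|p|$ is large, by properness.
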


For the strong Whitney topology, the situation is different: $\mathscr D_n$ is open.
\begin{theorem}[Openness in the two-sided strong Whitney $C^1$ topology]\label{thm:whit}
Let $f\in \operatorname{Diff}^1(\mathbb R^n)$ be Anosov with respect to a complete Riemannian metric, and let $k=\dim E^s$. Then there exists a complete smooth Riemannian metric $\hat{g}$ and a two-sided strong Whitney $C^1$ neighborhood $\mathcal{U}$ of $f$ such that every $h\in \mathcal{U}$ is Anosov with respect to $\hat{g}$. Moreover,
$$\dim E^s_h=k, \qquad \dim E^u_h=n-k.$$
The Anosov constants may be chosen independently of $h\in \mathcal U$. Consequently, the set of diffeomorphisms of $\mathbb R^n$ admitting a complete Anosov metric is open in the two-sided strong Whitney $C^1$ topology.

\end{theorem}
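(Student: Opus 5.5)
The plan is a cone-field argument. Everything is done in a single metric built from the given one, and the strong Whitney topology supplies perturbation sizes adapted to the local geometry.

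\textbf{Step 1: normalize the metric.} Let $g$ be the given complete metric, with splitting $T\mathbb R^n=E^s\oplus E^u$ and constants $C,\lambda$, so that $\|Df^m|_{E^s}\|\le C\lambda^m$ and $\|Df^{-m}|_{E^u}\|\le C\lambda^m$. Replace $g$ by an adapted (Mather-type) metric
\[
g'=\sum_{j=0}^{N-1}\Big(\lambda^{-2j}\,(f^{j})^*g|_{E^s}+\lambda^{-2j}\,(f^{-j})^*g|_{E^u}\Big),
\]
declaring $E^s\perp E^u$. Here $N$ is chosen so that $C\lambda^{N/2}<1$.

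\begin{itemize}
\item In $g'$ we get one-step contraction $\|Df|_{E^s}\|\le\mu<1$ and one-step expansion $\|Df^{-1}|_{E^u}\|\le\mu$.
\item $g'$ and $g$ are uniformly equivalent, with constants depending only on $C,\lambda,N$ and $\sup\|Df^{\pm1}\|$.
\end{itemize}

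Two points need care. First, $\sup\|Df^{\pm1}\|_g<\infty$ is not automatic in general. If it fails, I would instead use only the iterates and bound them through the Anosov estimates on each factor, together with the uniform angle bound between $E^s$ and $E^u$, which I would assume is part of the Anosov definition. Second, $E^{s,u}$ are only continuous, so I would smooth $g'$ locally to obtain a smooth metric $\hat g$. A uniformly close smoothing keeps $\hat g$ complete (it is bi-Lipschitz to $g$) and changes $\mu$ only slightly.

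\textbf{Step 2: invariant cones.} In $\hat g$, define the cones
\[
\mathcal C^u_x=\{a+b:\ a\in E^u_x,\ b\in E^s_x,\ \|b\|\le\gamma\|a\|\},
\]
and $\mathcal C^s_x$ symmetrically. Then $Df$ maps $\mathcal C^u_x$ strictly inside $\mathcal C^u_{f(x)}$ with uniform margin, and expands its vectors by a factor $\ge\mu^{-1}$.

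\textbf{Step 3: the Whitney neighborhood.} This is where the strong Whitney topology is used, and it is the main obstacle. The needed bound is
\[
\|Dh(x)-Df(x)\|_{\hat g}<\varepsilon \quad\text{and}\quad \|Dh^{-1}(x)-Df^{-1}(x)\|_{\hat g}<\varepsilon \quad\text{for all }x,
\]
with $\varepsilon$ uniform in the $\hat g$ norms. The difficulty is that Whitney neighborhoods are defined with respect to Euclidean norms. So I take a positive continuous function $\delta(x)$ that compensates for the distortion between $\hat g$ and the Euclidean metric at $x$, $f(x)$ and $f^{-1}(x)$, and set
\[
\mathcal U=\{h:\ \|Dh-Df\|_{\mathrm{eucl}}<\delta \text{ and } \|Dh^{-1}-Df^{-1}\|_{\mathrm{eucl}}<\delta \text{ pointwise}\}.
\]
A $C^0$-closeness condition is also needed, so that comparing $E^u_{h(x)}$ with $E^u_{f(x)}$ makes sense. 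Continuity of the splitting allows $\delta$ to be chosen so that the cone at $h(x)$ is close to the cone at $f(x)$. I would handle this by working with cones defined at the base point and transporting them, or by requiring $h(x)$ to lie within a small $\delta$-ball of $f(x)$ on which the cones vary by less than the margin from Step 2.

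Once $\mathcal U$ is fixed, the cones are still strictly invariant under $Dh$, with expansion factor $\ge(\mu+\varepsilon')^{-1}>1$ and the same constants for every $h\in\mathcal U$.

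\textbf{Step 4: the splitting for $h$.} Define
\[
E^u_h(x)=\bigcap_m Dh^m\,\mathcal C^u_{h^{-m}x},\qquad E^s_h(x)=\bigcap_m Dh^{-m}\,\mathcal C^s_{h^{m}x}.
\]
This is a standard graph-transform argument. It produces continuous invariant subbundles of dimensions $n-k$ and $k$, uniformly transverse, with uniform contraction and expansion in $\hat g$. No compactness enters, because all estimates are pointwise and the constants are uniform. Hence every $h\in\mathcal U$ is Anosov with respect to $\hat g$, with $\dim E^s_h=k$, and the openness of the set of diffeomorphisms admitting a complete Anosov metric follows.
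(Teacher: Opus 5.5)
Your plan is essentially the paper's proof: an adapted norm with $E^s\perp E^u$ and one-step contraction, a relatively close smooth approximation $\hat g$, pointwise cone estimates converted into positive continuous Whitney tolerances, and the perturbed splitting with $h$-independent constants, where your cone intersection plays the role of the paper's Banach fixed point on bundle maps $L\colon E^u\to E^s$ with $\|L\|_*\le a$. Two small fixes. First, the paper's definition gives no uniform angle bound, and $\hat g$ need not be bi-Lipschitz to $g$; neither is needed, because completeness only requires $\hat g\ge c\,g$, which follows from the $j=0$ term and orthogonality via $\|v\|_g\le\|v^s\|_g+\|v^u\|_g\le\sqrt2\,\|v\|_{g'}$. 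Second, your condition $C\lambda^{N/2}<1$ fits weights $\lambda^{-j}$ rather than $\lambda^{-2j}$; alternatively, use the paper's infinite sum with a rate $\alpha\in(\lambda,1)$.
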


\textbf{Structure of the paper.} Section \ref{Prel} collects preliminary definitions and lemmas. In Section \ref{Smooth Conjugacy}, we prove the finite-difference lemma, develop the moving-frame construction, and establish smooth conjugacy to the normal forms; we then prove Theorem \ref{thm:mainresult} and present various examples arising from the classification theorem. Section \ref{index spectrum} studies the index spectrum of affine Anosov maps and proves rigidity results for complete Anosov metrics. Section \ref{Affine parameter geometry} describes the affine parameter geometry and proves \ref{Aff Par Str}. Finally, Section \ref{Stability at Infinity} studies stability at infinity and proves Theorems \ref{thm:weak} and \ref{thm:whit}.

\section{Preliminaries}\label{Prel}
\begin{definition}
Let $M$ be a smooth manifold and $f\colon M\to M$ a diffeomorphism. A diffeomorphism $f:M\to M$ is called \textit{Anosov-realizable} if there exists a complete Riemannian metric $\rho$ on $M$ with respect to which $f$ is Anosov; that is, there exists a continuous $Df$-invariant splitting $TM=E^s\oplus E^u$ and constants $C\ge1$, $\lambda\in(0,1)$ such that for all $n\ge0$:
\[
\|Df^n(v)\|_\rho \le C\lambda^n\|v\|_\rho \quad \forall v\in E^s,\qquad
\|Df^{-n}(w)\|_\rho \le C\lambda^n\|w\|_\rho \quad \forall w\in E^u.
\]
The splitting may be trivial, i.e., $E^s=\{0\}$ or $E^u=\{0\}$ is allowed.
\end{definition}

An affine map on $\mathbb{R}^n$ is $f(x)=Ax+v$ with $A\in GL(n,\mathbb{R})$ and $v\in\mathbb{R}^n$; its derivative is constant: $Df=A$.

\begin{definition}
A matrix $A$ is \textbf{hyperbolic} if it has no eigenvalue on the unit circle $\mathbb{S}^1$.
\end{definition}
\begin{lemma}[Hyperbolic case]\label{lem:hyperbolic}
    If $A$ is hyperbolic, then every affine map $x\mapsto Ax+v$ is Anosov with respect to an adapted Euclidean metric.
\end{lemma}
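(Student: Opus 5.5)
The plan is the standard adapted-norm construction for a hyperbolic matrix, transported to the constant Riemannian metric it defines on $\mathbb R^n$. Since $A$ is hyperbolic, split $\mathbb R^n=E^s\oplus E^u$ into the real $A$-invariant generalized eigenspaces for eigenvalues with $|\mu|<1$ and $|\mu|>1$ respectively; these are complementary because $\operatorname{Spec}(A)\cap\mathbb S^1=\varnothing$. Let $A_s=A|_{E^s}$ and $A_u=A|_{E^u}$. By the spectral radius formula, $\rho(A_s)<1$ and $\rho(A_u^{-1})<1$. Fix $\lambda$ with $\max\{\rho(A_s),\rho(A_u^{-1})\}<\lambda<1$. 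Then $\|A_s^k\|\le C\lambda^k$ and $\|A_u^{-k}\|\le C\lambda^k$ for some $C\ge1$ in any fixed Euclidean norm.

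Next, make the constants clean by using an adapted inner product. Put
\[
\langle x,y\rangle_s=\sum_{k=0}^{N-1}\lambda^{-2k}\langle A_s^kx,A_s^ky\rangle
\]
on $E^s$, and define $\langle\cdot,\cdot\rangle_u$ analogously with $A_u^{-1}$. Choosing $N$ large, the standard Mather-type estimate gives $\|A_sx\|_s\le\lambda'\|x\|_s$ and $\|A_u^{-1}y\|_u\le\lambda'\|y\|_u$ for some $\lambda'<1$. Alternatively, one can keep the Euclidean norm and accept the constant $C$, which the definition allows. Then declare $E^s\perp E^u$ and sum the two inner products. This produces an inner product $\langle\cdot,\cdot\rangle_A$ on $\mathbb R^n$.

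Regard $\langle\cdot,\cdot\rangle_A$ as a constant Riemannian metric $\rho$ on $\mathbb R^n$. It is isometric to the standard Euclidean metric via a linear change of coordinates, so it is complete. Because $D f_{A,v}(x)=A$ at every point and $Df^n_{A,v}=A^n$, the constant splitting $T_x\mathbb R^n=E^s\oplus E^u$ is continuous and $Df$-invariant. The required estimates $\|A^n w\|_\rho\le C\lambda^n\|w\|_\rho$ on $E^s$ and $\|A^{-n}w\|_\rho\le C\lambda^n\|w\|_\rho$ on $E^u$ hold uniformly in the base point. The translation $v$ plays no role because it does not affect the derivative.

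The only step requiring care is the uniform exponential bound on $E^s$ and $E^u$ when $A$ has nontrivial Jordan blocks. This is handled by choosing $\lambda$ strictly between the spectral radius and $1$, so that polynomial growth factors from the Jordan blocks are absorbed into $C$, or into $\lambda'$ after adapting the norm. The trivial-splitting cases $E^s=0$ or $E^u=0$ are covered by the same argument.
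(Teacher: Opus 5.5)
Your proposal is correct and follows essentially the same route as the paper: an adapted constant inner product on $\mathbb R^n=E^s_A\oplus E^u_A$, with the constant splitting invariant because $Df_{A,v}\equiv A$, completeness holding because the metric is constant, and the translation $v$ playing no role. You only spell out details the paper leaves implicit, namely the finite Mather-type averaging, where $\|A_s^N\|^{1/N}\to\rho(A_s)<\lambda$ makes the estimate close, and the absorption of Jordan-block growth into the constant.
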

\begin{proof}
    Let $E_A^s$ and $E_A^u$ be the stable and unstable generalized eigenspaces. Choose an adapted norm so that $\|A|_{E_A^s}\|<1$ and $\|A^{-1}|_{E_A^u}\|<1$. Since $Df=A$ is constant, the constant splitting $E_A^s\oplus E_A^u$ gives the Anosov estimates. The metric is complete. 
\end{proof}

\begin{lemma}[Completeness of weighted product metrics]\label{lem:completeness}
    Let $(B, g_B)$ be a complete Riemannian manifold and let
    $\phi_1, \cdots, \phi_r: B\to (0, \infty)$ be smooth functions. Then $g=g_B+\sum^r_{i=1}\phi_i(b)^2 \textrm{d}\xi^2_i$ defines a complete Riemannian metric on $B\times \mathbb R^r$.
\end{lemma}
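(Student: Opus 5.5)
The approach is to reduce completeness to the Hopf–Rinow criterion: a metric is complete iff every closed $g$-bounded set is compact, or equivalently iff every Cauchy sequence converges. I would work with Cauchy sequences. The key observation is that the projection $\pi\colon B\times\mathbb R^r\to B$ is distance non-increasing. Indeed, $g\ge \pi^*g_B$ as quadratic forms, so for any piecewise $C^1$ curve $\gamma=(b(t),\xi(t))$ we have $L_g(\gamma)\ge L_{g_B}(b)$. Hence $d_B(\pi p,\pi q)\le d_g(p,q)$.

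Let $(p_k)=(b_k,\xi^{(k)})$ be $g$-Cauchy. By the observation above, $(b_k)$ is $g_B$-Cauchy, so it converges to some $b_\infty\in B$ by completeness of $B$. In particular, all $b_k$ lie in a compact set $K\subset B$, for instance a closed $d_B$-ball around $b_\infty$. Since $B$ is complete, Hopf–Rinow makes closed bounded balls compact.

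The main step is controlling the fiber coordinates, and this is where one must be careful. The $b_k$ are bounded, but a short curve joining $p_k$ to $p_l$ might a priori wander far in $B$, where the $\phi_i$ could be tiny. To handle this, fix $R$ so that all $b_k$ lie in the ball $\bar B(b_\infty,R)$, and set $K'=\bar B(b_\infty,R+1)$, which is compact. Let $c=\min_{i}\min_{K'}\phi_i>0$. Any curve from $p_k$ to $p_l$ of $g$-length less than $1$ has base projection of $g_B$-length less than $1$, so the projection stays inside $K'$. Along such a curve,
\[
L_g(\gamma)\ge \int \Big(\sum_i \phi_i(b(t))^2\dot\xi_i^2\Big)^{1/2}dt\ge c\int|\dot\xi|\,dt\ge c\,|\xi^{(k)}-\xi^{(l)}|.
\]
So for $k,l$ large (where $d_g(p_k,p_l)<1$), we get $|\xi^{(k)}-\xi^{(l)}|\le c^{-1}d_g(p_k,p_l)$. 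Thus $(\xi^{(k)})$ is Euclidean-Cauchy and converges to some $\xi_\infty$.

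Finally, the sequence converges to $(b_\infty,\xi_\infty)$ in the manifold topology. The metric $d_g$ induces that topology, so $p_k\to(b_\infty,\xi_\infty)$ in $d_g$ as well. Hence $g$ is complete.

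As an alternative route, one could show directly that closed $d_g$-balls are compact. The inequality above shows that a ball of radius $\rho$ projects into a compact subset of $B$, and that its fiber coordinates are bounded by $\rho/c$ with $c$ the minimum of the $\phi_i$ over the compact base region. So the ball sits inside a compact product set, and Hopf–Rinow again gives completeness.

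The main obstacle is exactly the localisation step: obtaining a uniform positive lower bound $c$ for the weights. This requires first confining the relevant curves to a compact region of $B$ using the distance-nonincreasing projection.
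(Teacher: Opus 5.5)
Your proof is correct and follows essentially the same route as the paper. The $1$-Lipschitz projection gives convergence of the base points, Hopf--Rinow gives a compact base region on which the $\phi_i$ are bounded below, short connecting curves are confined to that region so the fiber coordinates are Cauchy, and convergence in the manifold topology then gives $d_g$-convergence. The only cosmetic difference is that you confine curves of length $<1$ to $\bar B(b_\infty,R+1)$ and obtain the bound $|\xi^{(k)}-\xi^{(l)}|\le c^{-1}d_g(p_k,p_l)$, whereas the paper runs an $\eta$-argument inside a ball of radius $R$.
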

\begin{proof}
    Let $ p_m=(b_m,\xi_m) $ be a $g$-Cauchy sequence (Cauchy with respect the metric $g$). Since the projection 
    $ \pi:(B\times\mathbb R^r,g)\longrightarrow(B,g_B) $
is $1$-Lipschitz, $(b_m)$ is a $g_B$-Cauchy sequence. Since $g_B$ is
complete, there exists $b_\infty\in B$ such that
$
b_m\longrightarrow b_\infty.
$

Fix $R>0$. By the Hopf-Rinow theorem, the closed ball
$
K:=\overline{B_{g_B}(b_\infty,R)}
$
is compact. Since each $\phi_i$ is continuous and strictly positive,
$
c_i:=\min_{b\in K}\phi_i(b)>0.
$
Set
$
c:=\min_{1\le i\le r}c_i>0.
$

We show that each fiber coordinate sequence $(\xi_{m,i})$ is Cauchy.
Fix $\eta>0$. Since $(p_m)$ is $g$-Cauchy and $b_m\to b_\infty$, for
all sufficiently large $m,\ell$ we have
$
b_m,b_\ell\in B_{g_B}(b_\infty,R/4)
$
and
$$
d_g(p_m,p_\ell)<
\min\left\{\frac R4,\frac{c\eta}{2}\right\}.
$$
Choose a piecewise smooth curve $\gamma_{m,\ell}$ joining $p_m$ to
$p_\ell$ such that
$$
L_g(\gamma_{m,\ell})
<
d_g(p_m,p_\ell)+\frac{c\eta}{2}<
\min\left\{\frac R2,c\eta\right\}.
$$
The projected curve $\pi\circ\gamma_{m,\ell}$ has $g_B$-length no
greater than $L_g(\gamma_{m,\ell})$. Since its initial point lies in
$B_{g_B}(b_\infty,R/4)$, the whole projected curve remains in $K$.
Therefore $\phi_i\ge c_i$ along $\gamma_{m,\ell}$, and hence
$$
|\xi_{m,i}-\xi_{\ell,i}|
\le
c_i^{-1}L_g(\gamma_{m,\ell})
\le
c^{-1}L_g(\gamma_{m,\ell})
<
\eta.
$$
Thus $(\xi_{m,i})$ is Cauchy for every $i$. Hence $\xi_m\longrightarrow\xi_\infty$
for some $\xi_\infty\in\mathbb R^r$.

It follows that
$
p_m\longrightarrow(b_\infty,\xi_\infty)
$
in the manifold topology. Since the Riemannian distance induces the
manifold topology, the convergence also holds with respect to $d_g$.
Therefore $g$ is complete.

\end{proof}

\section{Smooth Conjugacy to Normal Forms}\label{Smooth Conjugacy}
\begin{lemma}\label{Cohomologous - affine}
    Let $K$ be a finite-dimensional real vector space, let $B\in GL(K)$, and let $q:\mathbb{R}\mapsto K$ be an affine map. Then there exists a polynomial map $\phi: \mathbb{R}\mapsto K$ such that
    $$\phi(t+1)=B\phi(t)+q(t).$$
\end{lemma}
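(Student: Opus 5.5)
We need polynomial $\phi$ with $\phi(t+1)-B\phi(t)=q(t)$, where $q(t)=q_0+tq_1$. The plan is to decompose $K$ into the generalized $1$-eigenspace of $B$ and a complement, since the difference operator behaves differently on the two pieces.

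\textbf{Splitting.} Write $K=K_1\oplus K'$, where $K_1=\ker(B-I)^m$ is the generalized eigenspace of $B$ for eigenvalue $1$ and $K'$ is the sum of the remaining generalized eigenspaces. Both are $B$-invariant, $B-I$ is nilpotent on $K_1$ (say $N=B-I$ with $N^m=0$), and $B-I$ is invertible on $K'$. Projecting $q$ onto each summand gives affine maps $q_1(t),q'(t)$, and it suffices to solve the equation separately on each summand and add the solutions.

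\textbf{Invertible part.} On $K'$, look for $\phi'(t)=a+tb$. The equation becomes
\[
a+b+tb=Ba+tBb+q'_0+tq'_1 ,
\]
which is equivalent to $(I-B)b=q'_1$ and $(I-B)a=q'_0-b$. Both are uniquely solvable because $I-B$ is invertible on $K'$, so $\phi'$ is affine.

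\textbf{Unipotent part.} On $K_1$, write $B=I+N$ and define the forward difference $\Delta\phi(t)=\phi(t+1)-\phi(t)$. The equation becomes $(\Delta-N)\phi=q$. Consider the space $\mathcal P_d$ of $K_1$-valued polynomials of degree $\le d$. The operator $\Delta$ maps $\mathcal P_d$ onto $\mathcal P_{d-1}$ and is nilpotent, and $N$ acts coefficientwise and commutes with $\Delta$. The obstacle is that $\Delta-N$ is not invertible on any $\mathcal P_d$: it is a sum of two commuting nilpotents. The remedy is to allow the degree to grow.

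Because $\Delta$ and $N$ commute, one can try a formal inverse of the form $-\sum_j N^{-?}$, but this is not available since $N$ is not invertible. Instead I would construct a solution directly by solving $\Delta\psi=g$, which is always possible in polynomials: $\Delta:\mathcal P_{d+1}\to\mathcal P_d$ is surjective, for example via the binomial basis $\binom{t}{k}$, where $\Delta\binom{t}{k+1}=\binom tk$. Let $S$ be a right inverse of $\Delta$ that raises degree by one, say $S\binom tk=\binom t{k+1}$ applied coefficientwise. Then $S$ commutes with $N$. Set
\[
\phi=\sum_{j=0}^{m-1} N^j S^{j+1} q .
\]
Using $\Delta S=\mathrm{id}$ gives
\[
\Delta\phi=\sum_j N^jS^j q=q+\sum_{j\ge1}N^jS^jq,
\]
while
\[
N\phi=\sum_{j\ge0}N^{j+1}S^{j+1}q=\sum_{j\ge1}N^jS^jq .
\]
The last term, $N^mS^mq$, vanishes because $N^m=0$. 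Hence $(\Delta-N)\phi=q$, and $\phi$ is a polynomial of degree at most $m+1$.

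\textbf{Conclusion.} Add the two solutions $\phi'$ and $\phi_1$. The only delicate step is the unipotent block. The Neumann-type series above handles it, and the points to verify are that $S$ is well defined on $K_1$-valued polynomials and commutes with the constant operator $N$; both follow because $S$ acts only on the scalar polynomial part.
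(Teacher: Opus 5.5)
Your proof is correct and follows the paper's argument essentially step for step. You use the same splitting into the generalized $1$-eigenspace and a $B$-invariant complement, the same affine ansatz on the complement, and the same truncated series $\sum_{j=0}^{m-1} N^j S^{j+1} q$ built from a right inverse $S$ of $\Delta$ that commutes with $N$. Your explicit choice $S\binom{t}{k}=\binom{t}{k+1}$ is a concrete instance of the componentwise right inverse the paper gets from surjectivity of $\Delta$. The aside about a formal inverse in negative powers of $N$ is a discarded false start and should be deleted.
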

\begin{proof}
    Let $q(t)=at+b$ with $a, b\in K$. We first separate the generalized $1$-eigenvalue of $B$. Let $K_1=\ker (I-B)^r$ for $r$ sufficiently large, and choose a $B$-invariant complement $K_0$ such that $K=K_1\oplus K_0$, where $1$ is not an eigenvalue of $B|_{K_0}$. Decompose $q=q_1+q_0$ according to this direct sum. It is enough to solve the equation on $K_1$ and $K_0$ separately.

    On $K_0$, the operator $I-B$ is invertible. We find an affine solution 
    $$\phi_0(t+1)=B\phi_0(t)+q_0(t).$$
    and comparing the coefficient of $t$ and the constant term gives 
    $$(I-B)\alpha=a_0\qquad (I-B)\beta=b_0-\alpha,  $$
    where $q_0(t)=a_0t+b_0$. Since $I-B$ is invertible on $K_0$, such $\alpha, \beta\in K_0$ exist.

    It remains to solve the equation on $K_1$. On $K_1$, write $B=I+N$ where $N$ is a nilpotent, say $N^r=0$. The equation becomes
    $$\phi_1(t+1)-\phi_1(t)=N\phi_1(t)+q_1(t)\textrm{ or } (\Delta-N)\phi_1=q_1,$$
    where $\Delta \psi(t)=\psi(t+1)-\psi(t).$

    The finite difference operator $\Delta$ is surjective on scalar polynomials. Indeed, $$\Delta\left(\frac{t^{d+1}}{d+1}\right)=t^d+\text{terms of lower degree},$$
    and induction on the degree gives the claim. The same conclusion holds componentwise for $K_1$-valued polynomials. Choose a linear right inverse $S$ of $\Delta$. We take $S$ componentwise, so that it commutes with $N$.

    Define $\phi_1= \sum_{j=0}^{r-1} S^{j+1}N^j q_1,$ which is a polynomial map. Since $N^r=0$, we compute 
    $$\begin{aligned}(\Delta-N)\phi_1 &= \sum_{j=0}^{r-1} S^jN^j q_1-\sum_{j=0}^{r-1} S^{j+1}N^{j+1}q_1\\
    &= q_1-S^rN^r q_1\\
    &=q_1.
    \end{aligned}
    $$
    Thus $\phi_1$ solves the equation on $K_1$.

    Finally, $\phi=\phi_0+\phi_1$ is a polynomial solution of 
    $$\phi(t+1)=B\phi(t)+q(t).$$
    \end{proof}

A linear rescaling of the real variable gives the following corollary.

\begin{corollary}\label{Coro1}
    Let $K$ be a finite-dimensional real vector space, let $B\in GL(K)$, and let $q:\mathbb{R}\mapsto K$ be an affine map. For any fixed $\beta \in \mathbb{R}\setminus\{0\}$, there exists a polynomial map $\phi: \mathbb{R}\mapsto K$ such that
    $$\phi(t+\beta)=B\phi(t)+q(t).$$
\end{corollary}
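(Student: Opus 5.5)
We reduce Corollary~\ref{Coro1} to Lemma~\ref{Cohomologous - affine} by rescaling the real variable. Fix $\beta\neq0$ and write $q(t)=at+b$ with $a,b\in K$.

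Define the auxiliary affine map $\tilde q(s):=q(\beta s)=(\beta a)s+b$. Lemma~\ref{Cohomologous - affine} applies to $B$ and $\tilde q$, and it gives a polynomial map $\psi:\mathbb R\to K$ with
$$\psi(s+1)=B\psi(s)+\tilde q(s)\qquad\text{for all } s\in\mathbb R.$$

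Now set $\phi(t):=\psi(t/\beta)$. This is a polynomial map, since it is the composition of a polynomial with a linear map of $\mathbb R$. Substituting $s=t/\beta$ gives
$$\phi(t+\beta)=\psi(t/\beta+1)=B\psi(t/\beta)+\tilde q(t/\beta)=B\phi(t)+q(t),$$
which is the required identity.

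There is no real obstacle. The only points to check are that $\tilde q$ is again affine, so the lemma applies, and that the rescaling $s=t/\beta$ requires $\beta\neq0$, which is exactly the hypothesis.
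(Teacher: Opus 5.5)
Your proof is correct and follows the paper's argument exactly. You rescale via $\tilde q(s)=q(\beta s)$, apply Lemma~\ref{Cohomologous - affine} to obtain $\psi$, and set $\phi(t)=\psi(t/\beta)$.
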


\begin{proof}
Let $Q(s)=q(\beta s)$. By Lemma~\ref{Cohomologous - affine}, there exists a polynomial map $\psi:\mathbb R\to K$ satisfying
$$\psi(s+1)=B\psi(s)+Q(s).$$
Define $\phi(t)=\psi(t/\beta)$. Then
$$\phi(t+\beta)=\psi(t/\beta+1)=B\psi(t/\beta)+Q(t/\beta)
=B\phi(t)+q(t).$$
Thus $\phi$ has the required property.
\end{proof}

\begin{proposition}[Smooth normal forms]\label{pro:smoconju}
    Let $f_{A, v}(x)=Ax+v$ be an affine diffeomorphism of $\mathbb R^n$. If $n\geq 2$ and $v\notin \operatorname{Im}(I-A)$, then $f_{A, v}$ is smoothly conjugate to one of the following normal forms:
    \begin{itemize}
        \item $(t, z)\mapsto (t+1, z), \qquad \text{if } \det A>0,$
        \item $(t, y, z') \mapsto (t+1, -y, z'), \qquad \text{if } \det A<0$.
    \end{itemize}
\end{proposition}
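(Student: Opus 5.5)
The plan is to find a linear ``height'' coordinate with constant drift, straighten the complementary directions by a polynomial change of variables using Corollary~\ref{Coro1}, and then remove the remaining transverse linear map $x\mapsto Bx$ by a smooth moving frame along the drift direction.

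\textbf{Height coordinate.} Since $v\notin\operatorname{Im}(I-A)$, the cokernel of $I-A$ is nonzero. Hence there is a nonzero linear functional $\ell$ with $\ell\circ(I-A)=0$, i.e.\ $\ell A=\ell$, and $\ell(v)\neq0$. Rescaling $\ell$, we may assume $\ell(v)=1$. Set $t=\ell(x)$; then $t\circ f_{A,v}=t+1$.

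\textbf{Polynomial normalization.} Choose a complement $K$ of $\ker\ell$'s transverse direction, i.e.\ write $\mathbb R^n=\ker\ell\oplus\mathbb R e$ with $\ell(e)=1$, and coordinates $x=te+k$ with $k\in K:=\ker\ell$. Since $\ker\ell$ is $A$-invariant (because $\ell A=\ell$), the map becomes
\[
(t,k)\mapsto\bigl(t+1,\;Bk+tc+d\bigr),
\]
where $B=A|_K$ and $c,d\in K$ come from $Ae-e$ and $v-e$. By Corollary~\ref{Coro1} with $\beta=1$ and $q(t)=tc+d$, there is a polynomial $\phi$ with $\phi(t+1)=B\phi(t)+q(t)$. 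In the coordinates $w=k-\phi(t)$ the map reads $(t,w)\mapsto(t+1,Bw)$. This conjugacy is a polynomial diffeomorphism, with inverse $k=w+\phi(t)$.

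\textbf{Moving frame.} It remains to conjugate $(t,w)\mapsto(t+1,Bw)$ to $(t,w)\mapsto(t+1,Jw)$, where $J=I$ if $\det B>0$ and $J=\operatorname{diag}(-1,1,\dots,1)$ if $\det B<0$. Note $\det A=\det B$, since $A$ induces the identity on $\mathbb R^n/K$. Pick a smooth path $P:[0,1]\to GL(K)$ with $P(0)=I$ and $P(1)=BJ^{-1}$; this exists because $BJ^{-1}$ has positive determinant and $GL^+$ is connected. Reparametrize so that $P$ is constant near the endpoints. We want a smooth $M:\mathbb R\to GL(K)$ satisfying
\[
M(t+1)=B\,M(t)\,J^{-1}.
\]
Define $M=P$ on $[0,1]$ and extend by this rule to all of $\mathbb R$. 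Then $h(t,u)=(t,M(t)u)$ conjugates $(t,u)\mapsto(t+1,Ju)$ to $(t,w)\mapsto(t+1,Bw)$:
\[
M(t+1)Ju=BM(t)u.
\]

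\textbf{Main obstacle.} The delicate step is smoothness of the extended $M$ at the integers. The value and derivatives at $t=1$ from the left are those of $P$ near $1$, namely constant $BJ^{-1}$. From the right they are those of $BP(\cdot)J^{-1}$ near $0$, namely $B\cdot I\cdot J^{-1}$, also constant. So they match to all orders, and flatness near the endpoints resolves the issue. Composing the three conjugacies yields the stated normal forms. In the $\det A<0$ case we split $w=(y,z')$ with $y$ the first coordinate. This uses $\dim K=n-1\ge1$, which is where $n\ge2$ enters.
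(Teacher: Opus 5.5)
Your proposal is correct and takes essentially the same route as the paper: a left $1$-eigenfunctional $\ell$ as height coordinate, a polynomial shear $k\mapsto k-\phi(t)$ obtained from Corollary~\ref{Coro1}, and a moving frame that is flat near the endpoints and extended by a twisted periodicity rule. The only differences are cosmetic. You normalize $\ell(v)=1$ at the outset instead of rescaling $t$ afterwards, and your frame $M$, with $M(t+1)=BM(t)J^{-1}$, plays the role of the inverse of the paper's $C$, which satisfies $C(t+1)=RC(t)B^{-1}$.
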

\begin{proof}

    Since $v\notin \operatorname{Im}(I-A)$, its orthogonal projection onto $\operatorname{Im}(I-A)^{\perp}$ is nonzero. Choose $$e=\frac{\operatorname{proj}_{\operatorname{Im}(I-A)^\perp}v}{\left\|\operatorname{proj}_{\operatorname{Im}(I-A)^\perp}v\right\|},$$
    and define $\ell(x)=\langle x, e\rangle$. Then $\ell(e)=1$ and $$\ell(v)=\left\|\operatorname{proj}_{\operatorname{Im}(I-A)^\perp}v\right\|>0.$$
    Moreover, $e\perp\operatorname{Im}(I-A)$, so $\ell((I-A)x)=0$ for every $x\in \mathbb{R}^n$ and hence $\ell\circ A=\ell$. Let $t=\ell(x)$ and $K=\ker\ell$. Since $\ell\circ A=\ell$, the subspace $K$ is $A$-invariant.

    Every $x$ can be written uniquely as $x=te+z$ where $z\in K$. Since $\ell(f(x))=\ell(x)+\ell(v)$, the first coordinate of $f$ is $t+\ell(v)$. In these coordinates $f$ has the following form:\\
    
    \noindent \textbf{Claim:} In the coordinates $\mathbb R e\oplus K$, $f$ has the form
    $$f(t, z)=(t+\ell(v), Bz+q(t)),$$
    where $B=A|_K\in GL(K)$ is an invertible linear map of $K$, and $q(t)$ is an affine $K$-valued function.
    \begin{proof}[\emph{\textbf{Proof of Claim}}]
    Write $x=te+z$. Then
    $$f(x)=A(te+z)+v=tAe+Az+v.$$
    Subtracting $(t+\ell(v))e$, the $K$-component is
    $$Bz+t(Ae-e)+(v-\ell(v)e).$$
    Both $Ae-e$ and $v-\ell(v)e$ are in $K$ since applying $\ell$ gives zero. Hence
    $$q(t)=t(Ae-e)+(v-\ell(v)e).$$
    Thus $q(t)$ is affine.
    \end{proof}
    With respect to the decomposition
    $$ \mathbb R^n=\mathbb Re\oplus K,$$
    the matrix of $A$ has the form $$A=
   \begin{pmatrix}
    1&0\\
    a&B
    \end{pmatrix},$$
    where $a=Ae-e$ and $B=A|_K$. In particular, $\det A=\det B$.
    Now we remove this affine fiber translation. From Corollary \ref{Coro1}, we choose a smooth function $\phi: \mathbb R\to K$ satisfying the difference equation
    \begin{equation}\label{Eq1.Prop 1.6}
    \phi(t+\ell(v))=B\phi(t)+q(t).    
    \end{equation}
    Define
    $$H_1(t, z)=(t, z-\phi(t)).$$
    Then $H_1$ is a global smooth diffeomorphism with inverse
    $$H^{-1}_1(t, z)=(t, z+\phi(t)).$$
    Equation~\ref{Eq1.Prop 1.6} gives 
    $$H_1\circ f\circ H^{-1}_1(t, z)=(t+\ell(v), Bz).$$
    Thus $f$ is smoothly conjugate to 
    $$L(t, z)=(t+\ell(v), Bz).$$
    Note that, as $\ell(v)\neq 0$, the map $L$ is smoothly conjugate to $\tilde{L}(t,z)=(t+1,Bz)$ through the map $\tilde{H}(t,z)=\bigl(t/\ell(v),\,z\bigr)$. After this rescaling we can assume from now on that $\ell(v)=1$.
    
    It remains to simplify the matrix $B$. Let $m=n-1$.  Since $n\geq 2$, $m\geq 1$. Define $$R=\begin{cases}
        I_m &  \textrm{ if } \det A>0,\\
        \operatorname{diag}(-1, 1, \cdots, 1)& \textrm{ if } \det A<0.
    \end{cases}$$
    Since $\det A=\det B$, this is the same as choosing $R$ according to the sign of $\det B$. In either case, $\det(RB^{-1})>0$. Hence $RB^{-1}\in GL^+(m, \mathbb{R})$, where $GL^+(m, \mathbb{R})$ denotes the connected component of $GL(m, \mathbb{R})$ consisting of matrices with positive determinant. Since $GL^+(m, \mathbb R)$ is path connected, there exists a smooth path $C:[0, 1]\to GL^+(m, \mathbb R)$ such that
    $$C(0)=I_m, \qquad C(1)=RB^{-1}.$$
    Choose a smooth non-decreasing function $\eta:[0,1]\to[0,1]$ which is equal to $0$ near $0$ and to $1$ near $1$. After replacing $C$ by $C\circ\eta$, we may assume that $C$ is constant near both endpoints. Extend $C$ to $\mathbb R$ by
    $$ C(t+1)=RC(t)B^{-1}.$$
    Since $C$ is constant near the endpoints of $[0,1]$, this extension is smooth. Define
    $$H_2(t,z)=(t,C(t)z).$$
    Since $C(t)\in GL(m,\mathbb R)$, this is a global smooth diffeomorphism with inverse $$H_2^{-1}(t,z)=(t,C(t)^{-1}z).$$
    We have $$H_2\circ L(t,z)=(t+1,C(t+1)Bz).$$
    By definition of $C$, $C(t+1)B=RC(t)$. Therefore,
    $$H_2\circ L(t, z)=(t+1, RC(t)z).$$
    On the other hand, if $L_R(t, z)=(t+1, Rz),$ then 
    $$L_R(H_2(t, z))=L_R(t, C(t)z)=(t+1, RC(t)z).$$
    Thus 
    $$H_2\circ L=L_R \circ H_2.$$
    Hence $L$ is smoothly conjugate to $L_R$. Since $\det A=\det B$, if $\det A>0$, then $R=I_m$ and $L_R(t,z)=(t+1, z)$ is a pure translation. If $\det A<0$, then $R=\operatorname{diag}(-1, 1,\cdots, 1)$ and $L_R(t, y, z')=(t+1, -y, z')$, which is the White's map times identity on $\mathbb R^{n-2}$.
    
    \end{proof}
\subsection{Proof of Main Result}
    
\begin{lemma}[Smooth conjugacy and complete Anosov metrics]
\label{lem:smopre}
Let $H:M\to N$ be a $C^1$ diffeomorphism and suppose that
$$
G=H\circ F\circ H^{-1}.
$$
Then $F$ is Anosov-realizable if and only if $G$ is
Anosov-realizable. More precisely, if $\rho$ is a complete Anosov
metric for $G$, then $H^*\rho$ is a complete Anosov metric for $F$.
\end{lemma}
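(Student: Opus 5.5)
The plan is to prove the ``more precisely'' assertion directly. The converse direction then follows by symmetry, applying the same argument to $H^{-1}$, which is a $C^1$ diffeomorphism with $F=H^{-1}\circ G\circ H$. So let $\rho$ be a complete Riemannian metric on $N$ for which $G$ is Anosov, with invariant splitting $TN=E^s_G\oplus E^u_G$ and constants $C\ge1$, $\lambda\in(0,1)$. We define $\tilde\rho=H^*\rho$ on $M$ by $\tilde\rho_x(\xi,\eta)=\rho_{H(x)}(DH_x\xi,DH_x\eta)$. Since $DH_x$ is a linear isomorphism, this is positive definite. Note that $\tilde\rho$ is only continuous when $H$ is merely $C^1$; the Anosov definition only needs a metric for norms and lengths, and if a smooth metric is required one perturbs at the end, as discussed below.

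The key point is that $H:(M,\tilde\rho)\to(N,\rho)$ is an isometry by construction. It therefore preserves lengths of piecewise $C^1$ curves, and so it preserves distances. Completeness of $\tilde\rho$ then follows from completeness of $\rho$, because a $\tilde\rho$-Cauchy sequence maps to a $\rho$-Cauchy sequence, which converges, and applying $H^{-1}$ gives convergence in $M$. For the dynamics, set $E^{s/u}_F(x)=DH_x^{-1}E^{s/u}_G(H(x))$. This is a continuous splitting of $TM$, since $DH^{-1}$ is continuous. It is $DF$-invariant by the chain rule $DF^k_x=DH^{-1}_{G^k(H(x))}\,DG^k_{H(x)}\,DH_x$, which follows from $F^k=H^{-1}\circ G^k\circ H$. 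For $\xi\in E^s_F(x)$ we get
$$\|DF^k_x\xi\|_{\tilde\rho}=\|DG^k_{H(x)}DH_x\xi\|_\rho\le C\lambda^k\|DH_x\xi\|_\rho=C\lambda^k\|\xi\|_{\tilde\rho},$$
and the unstable estimate is identical with $F^{-k}$. So $F$ is Anosov with the same constants.

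The only delicate step is regularity, and I expect it to be the main obstacle. If the definition demands a smooth metric and $H$ is only $C^1$, then $H^*\rho$ is merely $C^0$. I would approximate it by a smooth metric $\hat\rho$ that is uniformly equivalent to it, say $e^{-\delta}\tilde\rho\le\hat\rho\le e^{\delta}\tilde\rho$, using a partition of unity. Uniform equivalence preserves completeness because distances change by at most the factor $e^{\delta}$. It also preserves the Anosov estimates, with $C$ replaced by $Ce^{2\delta}$ and the same $\lambda$. In the paper's applications $H$ is smooth, so $H^*\rho$ is already smooth and this step is vacuous.
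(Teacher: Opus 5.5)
Your proposal is correct and follows essentially the same route as the paper: pull back $\rho$ so that $H$ becomes an isometry, transport the splitting by $DH^{-1}$, and get invariance from the chain rule. The contraction estimates and completeness then follow from the isometry, and the converse comes from applying the argument to $H^{-1}$. Your extra remark that $H^*\rho$ is only continuous when $H$ is merely $C^1$, together with the fix of smoothing it to a uniformly equivalent smooth metric (as in Lemma~\ref{lem:smooth-relative-metric}), addresses a regularity point the paper's proof passes over; this point is harmless in the paper's applications, where $H$ is smooth.
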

\begin{proof}
Define $H^*\rho$ by
$$(H^*\rho)_x (u, \omega)=\rho_{H(x)}(DH_x u, DH_x \omega).$$
Then $H:(M, H^*\rho)\to (N, \rho)$ is an isometry. The stable bundle $E^s_G$ and unstable bundle $E^u_G$  of $G$ pull back under $DH^{-1}$ to stable bundle $E^s_F$ and unstable bundle $E^u_F$ of $F$.
$$E^s_F(x)=DH^{-1}_{H(x)}(E^s_G(H(x))),$$
$$E^u_F(x)=DH^{-1}_{H(x)}(E^u_G(H(x))),$$
Since $G\circ H =H\circ F$, we have 
$$DG_{H(x)}\circ DH_x=DH_{F(x)}\circ DF_x,$$
which means that $E^s_F, E^u_F$ are $DF$-invariant.

Since $H$ is an isometry, the Anosov contraction gives
$$\|DF^n_x u\|_{H^*\rho, F^n(x)}=\|DG^n_{H(x)}(DH_x u)\|_{\rho, G^n(H(x))}\leq C \lambda^n \|DH_x u\|_{\rho, H(x)}=C\lambda^n\|u\|_{H^*\rho, x}.$$
The corresponding estimate on the unstable bundle follows in the
same way from the estimate for $G^{-1}$.

Also if $(N, \rho)$ is complete, then $(M, H^*\rho)$ is complete.
The converse follows by applying the same argument to $H^{-1}$.
\end{proof}

\begin{lemma}[White's model maps]\label{lem:white'smap}
For every $n\geq 2$, the maps
$$T_+(t,z)=(t+1,z) \textrm{ and } T_-(t,y,z')=(t+1,-y,z').$$
on $\mathbb R^n$ admit complete Riemannian metrics with respect to which they are Anosov.
\end{lemma}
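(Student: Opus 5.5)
We construct warped product metrics of the form covered by Lemma~\ref{lem:completeness}, taking $B=\mathbb R$ with $g_B=dt^2$ and fiber coordinates $z=(z_1,\dots,z_{n-1})$. For $T_+$ we use
$$g=dt^2+\sum_{i=1}^{n-1}e^{2\lambda t}dz_i^2$$
for a fixed $\lambda>0$; Lemma~\ref{lem:completeness} makes it complete. Since $DT_+=I$, a fiber vector $w=\sum w_i\partial_{z_i}$ at $(t,z)$ maps to the same coordinate vector at $(t+1,z)$. Its norm therefore becomes $e^{\lambda(t+1)}|w|_{\mathrm{eucl}}$, i.e. it grows exactly by the factor $e^{\lambda}$. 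The direction $\partial_t$ has constant norm $1$, so it is neither contracted nor expanded. We must therefore also warp the $t$-direction.

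The fix is to make the $t$-direction contract. Choose a metric where the $t$-line itself is contracted:
$$g=e^{-2\mu t}dt^2+\sum_i e^{2\lambda t}dz_i^2.$$
Then $|\partial_t|$ shrinks by $e^{-\mu}$ under each iterate. The metric is still complete only if $\int e^{-\mu t}dt$ diverges at both ends, which fails at $+\infty$. So we instead reparametrize the base. We use a base coordinate $s$ with $g_B=ds^2$ on all of $\mathbb R$ and a base diffeomorphism $s\mapsto \sigma(s)$ conjugate to $t\mapsto t+1$, behaving like $s\mapsto e^{-\mu}s$ away from a neighborhood of $0$. That is impossible globally, since a contraction of $\mathbb R$ has a fixed point.

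We therefore choose the simplest valid index-$1$ splitting. Take $E^u=\operatorname{span}(\partial_{z_2},\dots)$ (possibly empty when $n=2$) and $E^s$ a line mixing $\partial_t$ and $\partial_{z_1}$. Concretely, for $n=2$ we replace the splitting by the $DT_+$-invariant line fields
$$E^s=\operatorname{span}\bigl(\partial_t\bigr),\qquad E^u=\operatorname{span}\bigl(\partial_{z_1}\bigr),$$
with metric $g=a(t)^2dt^2+b(t)^2dz_1^2$. The Anosov conditions then require $a(t+k)/a(t)\le C\lambda^k$ and $b(t)/b(t+k)\le C\lambda^k$ for $k\ge0$. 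The second is satisfied by $b=e^{\lambda' t}$. The first forces $a$ to decay exponentially to the right, which gives finite length toward $+\infty$ along the $t$-axis. Completeness, however, need not come from the $t$-axis alone: a curve going to $t=+\infty$ at fixed $z$ has finite length, but we can repair this by making $E^s$ tilted. We take $E^s(t,z)=\operatorname{span}(\partial_t+c(z)\partial_{z_1})$ — but invariance under $DT_+=I$ forces $c$ to be $T_+$-invariant, which is allowed since $c$ may depend on $z$. This gives room to use the transverse coordinate.

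The main obstacle is exactly this completeness/contraction tension in the $t$-direction. The plan at this step is to treat it as the heart of the proof rather than a routine check. First, we will design $a(t,z)$ decaying like $e^{-\mu t}$ for $t>0$ only in a region and growing with $|z|$, so that every path to $t=+\infty$ must have infinite length, and then verify via a direct path-length argument (adapting the proof of Lemma~\ref{lem:completeness}). Once $T_+$ is settled, $T_-$ follows by averaging the metric over $y\mapsto -y$, i.e. choosing all weights even in $y$. Then $T_-$ is an isometry on each $t$-slice modulo the shift, and the same estimates carry over. The higher-dimensional case is handled by taking a product with extra unstable $e^{2\lambda t}dz_i^2$ factors.
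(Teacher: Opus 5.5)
\textbf{Gap: the planar construction is never given, and the sketched ansatz cannot work.} Your proposal never exhibits a complete Anosov metric, even for the planar translation. After correctly discarding $a(t)^2dt^2+b(t)^2dz_1^2$, you defer exactly the step that carries all the content (``we will design $a(t,z)$\dots''). The repair you sketch also cannot work.
\begin{itemize}
\item Since $DT_+=I$, invariance gives $E^s(t+1,z)=E^s(t,z)$. So if $E^s=\operatorname{span}(\partial_t)$ along $[t_0,t_0+1]\times\{z_0\}$, it is so along the whole forward ray. That ray then has length at most $\sum_{k\ge0}C\lambda^k\int_{t_0}^{t_0+1}\|\partial_t\|_{(s,z_0)}\,ds<\infty$, however $a$ grows in $|z|$. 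A finite-length ray leaving every compact set contradicts completeness.
\item Hence in your tilted field $X=\partial_t+c(z)\partial_{z_1}$ the function $c$ must vanish nowhere.
\item But with $E^u=\operatorname{span}(\partial_{z_1})$ and a metric $a^2dt^2+b^2dz_1^2$, the unstable estimate forces $b(t+k,z)\ge C^{-1}\lambda^{-k}b(t,z)$. Then $\|X\|_{(t+k,z)}\ge|c(z)|\,b(t+k,z)\to\infty$. Yet $X$ is a $DT_+$-invariant field in $E^s$, so it must satisfy $\|X\|_{(t+k,z)}\le C\lambda^k\|X\|_{(t,z)}$.
\end{itemize}
So your ansatz has no solution, and no replacement construction is given.

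\textbf{Gap: the symmetrization step for $T_-$ is impossible.} Passing from $T_+$ to $T_-$ by making all weights even in $y$ fails, not just remains unfinished. Let $R(t,y)=(t,-y)$ on $\mathbb R^2$, and suppose $g$ is complete, $R^*g=g$, and $T_+$ or $T_-$ is Anosov for $g$.
\begin{itemize}
\item $R$ is an isometry commuting with the map, so $DR$ preserves $E^s$ and $E^u$. (A vector with a nonzero unstable component grows exponentially under forward iterates, and symmetrically.)
\item On the fixed line $\{y=0\}$ we have $DR=\operatorname{diag}(1,-1)$. So at each point $\partial_t$ lies in $E^s$ or in $E^u$, and by continuity in the same bundle along the whole line.
\item The corresponding half-line then has finite length, exactly as above, contradicting completeness.
\end{itemize}
Thus averaging a complete Anosov metric over $R$ always destroys the Anosov property.

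\textbf{How the paper does it.} The paper goes the other way. It takes White's complete Anosov metric $g_W$ for $W=T_-$ on $\mathbb R^2$ and notes that $W^2(t,y)=(t+2,y)$ is Anosov for $g_W$. It then pulls back by $S(t,y)=(2t,y)$, which satisfies $S\circ T_+=W^2\circ S$. Only your last step (unstable factors $e^{2t}d\xi_j^2$ plus Lemma~\ref{lem:completeness}) agrees with the paper.

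\textbf{An explicit metric for $T_+$.} If you want one for $T_+$ alone, restrict $\operatorname{diag}(2,\tfrac12)$ to the open quadrant $\{x>0,\ y>0\}$. In the coordinates $(\log x,\log y)$ it becomes the translation by $(\log 2,-\log 2)$, which is linearly conjugate to $T_+$. The metric $\bigl(1+\tfrac{1}{xy}\bigr)^2(dx^2+dy^2)$ is complete on the quadrant and has an invariant conformal factor. So $\partial_y$ and $\partial_x$ are still contracted and expanded by exactly the factor $2$. For $T_-$ you still need a genuinely non-symmetric construction such as White's.
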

\begin{proof}
White constructed a complete Riemannian metric $g_W$ on $\mathbb R^2$ for which $W(t, y)=(t+1, -y)$ is Anosov. Hence $W^2(t, y)=(t+2, y)$ is Anosov with respect to the same metric. Since the affine map $S(t, y)=(2t, y)$ satisfies
   $$S\circ (t,y\mapsto(t+1,y))=W^2\circ S, $$
   the two-dimensional translation $(t, y)\mapsto (t+1, y)$ also admits a complete Anosov metric, which we denote $g_{+, 2}$.

   For $n>2$, write the remaining coordinates as $\xi \in \mathbb R^{n-2}$. Define  
   $$g_- = g_W(t,y)+\sum_{j=1}^{n-2} e^{2t}\,d\xi_j^2$$
   for $T_-$ and $$ g_+ = g_{+,2}(t,y)+\sum_{j=1}^{n-2} e^{2t}\,d\xi_j^2$$
   for $T_+$. By Lemma~\ref{lem:completeness}, $g_+$ and $g_-$ are complete. For the Anosov estimates, use the stable and unstable bundles of the two-dimensional base and place each additional direction $\partial_{\xi_j}$ in the unstable bundle.
   Under either $T_+$ or $T_-$, the $t$-coordinate increases by $1$, and therefore
   $$\|D T_\pm(\partial_{\xi_j})\|_{g_\pm,T_\pm(t,\cdot)}=e\,\|\partial_{\xi_j}\|_{g_\pm,(t,\cdot)}. $$
   This implies that the inverse contracts these fiber directions by the factor $e^{-1}$. Combining with the Anosov estimates on the two-dimensional base, this gives an invariant splitting and uniform contraction and expansion estimate for $T_+$ and $T_-$. Hence both maps are Anosov with respect to complete Riemannian metrics.
\end{proof}

\begin{proof}[\textbf{\emph{Proof of Theorem~\ref{thm:mainresult}}}]
For necessity, suppose that $f_{A,v}$ is Anosov with respect to a
complete Riemannian metric. If $A$ is not hyperbolic, then $A$ has an eigenvalue of modulus one. Suppose, by contradiction, that
$p$ is a fixed point. Since $Df_p=A$, the Anosov splitting at $p$
is an $A$-invariant decomposition
$$T_p\mathbb R^n=E_p^s\oplus E_p^u.$$
On the stable subspace $E^s_p$, the Anosov estimate gives exponential contraction. Therefore every eigenvalue of $Df_p|_{E^s_p}$ has modulus less than 1. On the unstable subspace $E^u_p$, the inverse estimate gives exponential contraction for $Df_p^{-1}$. Therefore every eigenvalue of $Df_p|_{E^u_p}$ has modulus greater than 1. Hence $Df_p$ cannot have any eigenvalue of modulus 1. But $Df_p=A$ has an eigenvalue of modulus one, a contradiction. Therefore $f_{A,v}$ has no fixed point. Equivalently,
$$v\notin\operatorname{Im}(I-A).$$

For sufficiency, suppose first that $A$ is hyperbolic.
Lemma~\ref{lem:hyperbolic} gives a complete adapted Euclidean metric for which $f_{A,v}$ is Anosov.

Now suppose that $v\notin\operatorname{Im}(I-A)$.
By Proposition~\ref{pro:smoconju}, $f_{A,v}$ is smoothly conjugate to one of the model maps $T_+$ or $T_-$. By
Lemma~\ref{lem:white'smap}, both model maps admit complete Anosov
metrics. Lemma~\ref{lem:smopre} then gives a complete Anosov metric for $f_{A,v}$.
\end{proof}
\begin{theorem}[Smooth conjugacy classification]\label{thm:smoonormal}
    Let $f(x)=Ax+v$ be an affine diffeomorphism of $\mathbb R^n$, where $A\in GL(n, \mathbb R)$ and $n\ge 2$. Suppose that $f_{A,v}$ is Anosov-realizable. Then one of the following happens.
    \begin{itemize}
        \item $A$ is hyperbolic. In this case, $f_{A, v}$ is affine conjugate to the hyperbolic linear map $x \mapsto Ax$.
        \item $A$ is not hyperbolic, $v\notin \operatorname{Im}(I-A)$  and $\det A>0$. In this case, $f_{A, v}$ is smoothly conjugate to
        $$T_+(t, z)=(t+1, z).$$
        \item $A$ is not hyperbolic, $v\notin\operatorname{Im}(I-A)$, and  $\det A<0$. In this case, $f_{A, v}$ is smoothly conjugate to 
        $$T_-(t, y, z')=(t+1, -y, z').$$
    \end{itemize}
    Conversely, every map in one of these three cases is Anosov-realizable.
\end{theorem}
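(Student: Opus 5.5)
The plan is to combine Theorem~\ref{thm:mainresult} with Proposition~\ref{pro:smoconju}. The only new ingredient is the affine conjugacy in the hyperbolic case.

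Suppose $f_{A,v}$ is Anosov-realizable. By Theorem~\ref{thm:mainresult}, either $A$ is hyperbolic or $v\notin\operatorname{Im}(I-A)$. I will split the cases according to whether $A$ is hyperbolic, which makes the three bullets exhaustive.

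\emph{Hyperbolic case.} Hyperbolicity excludes $1$ from $\operatorname{Spec}(A)$, so $I-A$ is invertible. Set $p=(I-A)^{-1}v$; this is the unique fixed point of $f_{A,v}$. Let $\tau(x)=x-p$. A direct computation gives
$$\tau\circ f_{A,v}\circ\tau^{-1}(x)=A(x+p)+v-p=Ax+(A-I)p+v=Ax,$$
so $f_{A,v}$ is affinely conjugate to $x\mapsto Ax$.

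\emph{Nonhyperbolic case.} Theorem~\ref{thm:mainresult} forces $v\notin\operatorname{Im}(I-A)$. Since $\det A\neq0$, exactly one of $\det A>0$ or $\det A<0$ holds. Proposition~\ref{pro:smoconju} then gives a smooth conjugacy to $T_+$ or $T_-$ respectively; its proof shows that the sign of $\det B=\det A$ determines which model occurs.

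\emph{Converse.} In the first case, Lemma~\ref{lem:hyperbolic} applies directly. In the other two cases, the condition $v\notin\operatorname{Im}(I-A)$ is built into the hypothesis. So either Theorem~\ref{thm:mainresult} applies, or one can combine Lemma~\ref{lem:white'smap} with Lemma~\ref{lem:smopre}.

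There is no serious obstacle. The only point needing care is that the three cases are exhaustive and that the sign of $\det A$ selects the correct model in Proposition~\ref{pro:smoconju}.
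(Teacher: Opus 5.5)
Your proposal is correct and follows essentially the same route as the paper. In the hyperbolic case you translate by the unique fixed point $p=(I-A)^{-1}v$; your $\tau(x)=x-p$ is just the inverse of the paper's $H(x)=x+p$. In the nonhyperbolic case you invoke Theorem~\ref{thm:mainresult} and Proposition~\ref{pro:smoconju}, and you obtain the converse from Lemma~\ref{lem:hyperbolic} together with Lemmas~\ref{lem:white'smap} and~\ref{lem:smopre}.
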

\begin{proof}
   If $A$ is hyperbolic, then $1 \notin \operatorname{Spec}(A)$, so $I-A$ is invertible. Hence $f_{A, v}$ has a unique fixed point $p$ such that $(I-A)p=v$. The translation $H(x)=x+p$ gives
    $$H^{-1}\circ f_{A,v}\circ H(x)=H^{-1}(A(x+p)+v)=Ax+Ap+v-p=Ax.$$
    Thus $f_{A, v}$ is affine conjugate to the hyperbolic linear map $x\mapsto Ax$.

    If $A$ is not hyperbolic, Theorem~\ref{thm:mainresult} gives $v\notin\operatorname{Im}(I-A).$ Proposition~\ref{pro:smoconju} gives the remaining cases.

    Conversely, if $A$ is hyperbolic, then the linear map $x\mapsto Ax$ is Anosov with respect to an adapted complete Euclidean metric by Lemma~\ref{lem:hyperbolic}. The $T_-$ and $T_+$ admit complete Anosov metrics by Lemma~\ref{lem:white'smap}. Finally, any smooth conjugate of one of these maps is Anosov with respect to the pulled-back complete metric by Lemma~\ref{lem:smopre}. Thus we prove the reverse.
\end{proof}

We give three examples illustrating the classification.

\begin{example}[Real diagonal case]\label{ex:real-diagonal}
Let $$A=\operatorname{diag}(\lambda_1,\ldots,\lambda_n)\in GL(n,\mathbb R),
\qquad
v=(v_1,\ldots,v_n).$$
Then $f_{A, v}(x)$ is Anosov-realizable in either of the following cases:
\begin{enumerate}
\item $\left|\lambda_i\right|\neq1$ for all $i$. Then $A$ is hyperbolic, so Lemma~\ref{lem:hyperbolic} applies.

\item There exists an index $i$ such that
$$\lambda_i=1,\qquad v_i\neq0. $$ 
If $w=(I-A)x$, then $w_i=(1-\lambda_i)x_i=0.$ Thus every vector in $\operatorname{Im}(I-A)$ has zero $i$-th coordinate, whereas $v_i\neq0$. Hence
$$v\notin\operatorname{Im}(I-A).$$
By Theorem~\ref{thm:mainresult}, $f_{A,v}$ admits a complete Anosov metric.
\end{enumerate}
Moreover, in the second case, after permuting coordinates we may assume $i=1$. Then
$$A=1\oplus B,\qquad
B=\operatorname{diag}(\lambda_2,\ldots,\lambda_n), $$
and $v=(c,u)$ with $c\neq 0$. By Proposition~\ref{pro:smoconju}, the map is smoothly conjugate to a pure translation if $\det A>0$, and to White's map times the identity if $\det A<0$.
\end{example}

\begin{example}[A family with no fixed point]
Let $$A=
\begin{pmatrix}
1 & 0\\
a & B
\end{pmatrix},
\qquad
a\in\mathbb R^{n-1},\quad B\in GL(n-1,\mathbb R),$$
and let
$$v=(\tau,w),\qquad \tau\neq 0. $$
Then $(I-A)(t, z)=\bigl(0,-at+(I-B)z\bigr)$. Hence $\operatorname{Im}(I-A)\subset \{0\}\times\mathbb R^{n-1}$. Since the first coordinate of $v$ is $\tau\neq 0$, we have $v\notin\operatorname{Im}(I-A).$ Therefore $f_{A, v}$ has no fixed point and is Anosov-realizable by Theorem~\ref{thm:mainresult}.

Moreover, since $\det A=\det B,$ Proposition~\ref{pro:smoconju} shows that $f_{A,v}$ is smoothly conjugate to a pure translation if $\det B>0$, and to White's map times the identity if $\det B<0$.
\end{example}

\begin{example}[Unit complex block with drift]\label{ex:complex-drift}
Let    $$A=
\begin{pmatrix}
1&0&0\\
0&a&-b\\
0&b&a
\end{pmatrix},
\qquad a^2+b^2=1,\quad b\neq0, $$ and let $v=(\tau,u_1,u_2),\qquad \tau\neq 0.$ 

The eigenvalues of $A$ are $1$ and $a\pm ib$, all of which lie on the unit circle. Since the first coordinate of every vector in
$\operatorname{Im}(I-A)$ is zero whereas $\tau\ne0$, we have
$v\notin\operatorname{Im}(I-A)$. Hence $f_{A,v}$ has no fixed point and is Anosov-realizable.

In the special case $\tau=1$ and $(u_1, u_2)=(0, 0)$, write $A=1\oplus R_\theta$. Then $L(t,z)=(t+1,R_\theta z)$ is conjugate to the pure translation $(t,z)\mapsto(t+1,z)$ via
$$
H(t,z)=(t,R_{-t\theta}z).
$$
Indeed,
$$H\circ L(t,z)=(t+1,R_{-(t+1)\theta}R_\theta z)=(t+1,R_{-t\theta}z). $$
\end{example}

\section{The index spectrum of affine Anosov maps}\label{index spectrum}

\begin{definition}[Index Spectrum]
Let $f_{A, v}(x)=Ax+v$ be an affine diffeomorphism of $\mathbb R^n$, $n\ge 2$, which admits a complete Anosov metric. For such $f_{A, v}$ define
    $$\operatorname{Ind}(f_{A, v})=\{\dim E^s_g:
g \text{ is a complete Riemannian metric with respect to which } f_{A, v} \text{ is Anosov}\},$$
where $E^s_g$ denotes the stable bundle of an Anosov splitting associated with $g$ and the set is taken over all such metrics and splittings. Since $\mathbb R^n$ is connected and the splitting is continuous, $\dim E^s_g(x)$ is independent of $x$.
\end{definition}

\begin{theorem}[Index Spectrum Theorem]\label{thm:index}
    Let $f_{A, v}(x)=Ax+v$ be an affine diffeomorphism of $\mathbb R^n$, $n\ge 2$, which admits a complete Anosov metric. Then the following hold:
\begin{enumerate}
    \item[\emph{\text{1.}}] If $A$ is hyperbolic, then $\operatorname{Ind}(f_{A, v})=\{\dim E^s_A\}$, where $E^s_A = \bigoplus_{|\lambda| < 1} E_{\lambda}^{\mathrm{gen}}$ is the stable generalized eigenspace of $A$.
    \item[\emph{\text{2.}}] If $A$ is not hyperbolic and $v\notin \operatorname{Im}(I-A)$, then $\operatorname{Ind}(f_{A, v})=\{1, 2, \dots, n-1\}$.
\end{enumerate}
Moreover, in the second case, for every $k\in \{1, \dots, n-1\}$, there exists a complete Riemannian metric $g_k$ and an Anosov splitting $T\mathbb R^n=E^s_k \oplus E^u_k$ such that 
$$\dim E^s_{k} =k, \qquad \dim E^u_{k} =n-k.$$
The metrics may be chosen so that $g_k$ and $g_j$ are not uniformly equivalent whenever $k\ne j$. In other words, the identity map between $(\mathbb R^n, g_k)$ and $(\mathbb R^n, g_j)$ is not bi-Lipschitz.
\end{theorem}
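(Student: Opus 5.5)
\emph{Hyperbolic case (item 1).} If $A$ is hyperbolic, then $I-A$ is invertible and $f_{A,v}$ has a unique fixed point $p$ with $Df_p=A$. Let $g$ be any complete Anosov metric with splitting $E^s\oplus E^u$. Since $f^m(p)=p$, the estimates at $p$ involve the single norm $\|\cdot\|_{g,p}$. So $\|A^m u\|_{g,p}\le C\lambda^m\|u\|_{g,p}$ for $u\in E^s_p$, and $E^s_p$ is an $A$-invariant subspace on which $A^m\to0$. Hence $E^s_p\subset E^s_A$. In the same way, using $A^{-1}$, we get $E^u_p\subset E^u_A$. Since $E^s_p\oplus E^u_p=\mathbb R^n=E^s_A\oplus E^u_A$, both inclusions are equalities and $\dim E^s_g=\dim E^s_A$. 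This index is independent of the point, and it is attained by Lemma~\ref{lem:hyperbolic}.

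\emph{Drift case, exclusion of $0$ and $n$.} Suppose $v\notin\operatorname{Im}(I-A)$ and $E^s=T\mathbb R^n$ for some complete Anosov metric $g$. Let $d_g$ be the length metric of $g$. Any piecewise $C^1$ curve $\gamma$ satisfies $L_g(f^m\circ\gamma)\le C\lambda^m L_g(\gamma)$, so $d_g(f^mx,f^my)\le C\lambda^m d_g(x,y)$. Choosing $N$ with $C\lambda^N<1$ makes $f^N$ a strict contraction of the complete metric space $(\mathbb R^n,d_g)$, and Banach's fixed point theorem gives a fixed point of $f^N$. However, by Proposition~\ref{pro:smoconju}, $f^N$ is conjugate to $T_\pm^N$, which shifts $t$ by $N$ and therefore has no fixed point. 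This is a contradiction. Applying the same argument to $f^{-1}$ excludes $E^u=T\mathbb R^n$. Hence $\operatorname{Ind}(f_{A,v})\subset\{1,\dots,n-1\}$.

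\emph{Drift case, realization of every index.} By Proposition~\ref{pro:smoconju} and Lemma~\ref{lem:smopre}, it suffices to realize each $k$ for $T_\pm$, because pullback by the conjugacy $H$ preserves completeness, the splitting dimensions, and (non)uniform equivalence of pairs of metrics. We start from the complete two-dimensional metric $g_W$ or $g_{+,2}$ of Lemma~\ref{lem:white'smap} on the $(t,y)$-plane. Its splitting has one-dimensional stable and unstable bundles, since by the previous paragraph neither bundle can be trivial. For $0\le j\le n-2$, we set
$$g^{(j)}=g_{\mathrm{base}}+\sum_{i=1}^{j}e^{-2t}d\xi_i^2+\sum_{i=j+1}^{n-2}e^{2t}d\xi_i^2.$$
This metric is complete by Lemma~\ref{lem:completeness}. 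As in the proof of Lemma~\ref{lem:white'smap}, $\partial_{\xi_i}$ is contracted by $e^{-1}$ for $i\le j$ and expanded by $e$ for $i>j$, since $t\mapsto t+1$. Putting the first $j$ fiber directions into $E^s$ gives $\dim E^s=1+j$, which takes every value in $\{1,\dots,n-1\}$. We set $g_k=g^{(k-1)}$.

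\emph{Non-equivalence.} For $k<j$, the coordinate field $\partial_{\xi_{k}}$ (with $k\le n-2$) has $g_k$-norm $e^{t}$ and $g_j$-norm $e^{-t}$, because it is orthogonal to the base. The ratio $e^{2t}$ is unbounded as $t\to+\infty$, so the identity map is not bi-Lipschitz. This persists after pulling back by the fixed conjugacy $H$.

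The step I expect to need the most care is the exclusion of the trivial splittings, specifically the passage from infinitesimal contraction to contraction of $d_g$ on a noncompact manifold. This works because $d_g$ is defined as an infimum of lengths of curves and $g$ is complete. The other steps reduce directly to Lemma~\ref{lem:white'smap}.
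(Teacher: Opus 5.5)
Your proof is correct and follows the paper's argument essentially step by step. The steps match: the fixed-point analysis in the hyperbolic case (your inclusions $E^s_p\subset E^s_A$, $E^u_p\subset E^u_A$ plus a dimension count replace the paper's characteristic-polynomial factorization), the Banach fixed-point exclusion of indices $0$ and $n$, the weighted product metrics $g_{\mathrm{base}}+\sum e^{\mp 2t}d\xi_i^2$ over the two-dimensional base pulled back by $H$, and the unbounded norm ratio $e^{2t}$ of $\partial_{\xi_k}$ for non-equivalence, where checking one direction, as you do, already suffices.
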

\begin{proof}
    Suppose that $A$ is hyperbolic. Then $I-A$ is invertible, so $f_{A, v}$ has a unique fixed point $p$ satisfying $(I-A)p=v$. Let $g$ be any complete Anosov metric for $f_{A,v}$, and let $C\ge1$ and $\lambda\in(0,1)$ be corresponding Anosov constants. At the point $p$, the Anosov splitting gives the $A$-invariant decomposition
    $$T_p \mathbb R^n=E^s_g(p)\oplus E^u_g(p).$$
    Let $V^s=E_g^s(p)$ and $V^u=E_g^u(p)$. The stable estimate at the fixed point gives $\left\|(A|_{V^s})^m\right\|\leq C\lambda^m$. Taking $m$-th roots and using the spectral-radius formula yields $r(A|_{V^s})\leq \lambda<1$. Hence every eigenvalue of $A|_{V^s}$ has modulus less than 1. Similarly, $\left\|(A|_{V^u})^{-m}\right\|\leq C\lambda^m$, so $r((A|_{V^u})^{-1})\leq \lambda<1$. Thus every eigenvalue of $A|_{V^u}$ has modulus $>1$. Because $V^s$ and $V^u$ are complementary $A$-invariant subspaces, complexification gives the complementary $A_{\mathbb C}$-invariant decomposition
    $$\mathbb C^n=V^s_{\mathbb C}\oplus V^u_{\mathbb C}.$$
    Therefore $$
    \chi_{A_{\mathbb C}}(z)=\chi_{A_{\mathbb C}|V^s_{\mathbb C}}(z)\,\chi_{A_{\mathbb C}|V^u_{\mathbb C}}(z),$$
   where, for any linear operator $T$ on a finite-dimensional vector space, $$
   \chi_T(z):=\det(zI-T)$$
   denotes its characteristic polynomial.

   The estimates above imply
   $$ \operatorname{Spec}(A_{\mathbb C}|V^s_{\mathbb C})\subset\{|z|<1\},\qquad \operatorname{Spec}
   (A_{\mathbb C}|V^u_{\mathbb C})\subset\{|z|>1\}.$$
   
   Since $A$ is hyperbolic, the first factor contains exactly the
   eigenvalues of $A$ inside the unit disk, counted with algebraic multiplicity. Therefore $$\deg\chi_{A_{\mathbb C}|V^s_{\mathbb C}}=\dim E_A^s.$$
   Since $$ \deg\chi_{A_{\mathbb C}|V^s_{\mathbb C}}=\dim V^s,$$
   we obtain $$\dim V^s=\dim E_A^s.$$
   Since the stable bundle has constant dimension on the connected manifold $\mathbb R^n$, we have $\dim E^s=\dim E^s_A$. The metric and splitting were arbitrary, so 
    $$\operatorname{Ind}(f_{A, v})=\{\dim E^s_A\}.$$
    
    Now suppose that $A$ is not hyperbolic and $v\notin \operatorname{Im}(I-A)$. by Proposition \ref{pro:smoconju}, $f_{A, v}$ is smoothly conjugate to one of 
    $$T_+(t, y, \eta)=(t+1, y, \eta), \textrm{  or  } T_-(t, y, \eta)=(t+1, -y, \eta).$$
    where $\eta\in \mathbb R^{n-2}$. We write these two maps as
    $$T_{\sigma}(t, y, \eta)=(t+1, \sigma y, \eta), \qquad \sigma \in \{1, -1\}.$$
    Let $H$ be a smooth conjugacy satisfying
    $$H \circ f_{A, v}\circ H^{-1} =T_\sigma.$$
    We construct the required metrics for $T_\sigma$ and then pull them back by $H$.

    Let $g_{\sigma,2}$ be a complete metric for which $T_{\sigma,2}$ is Anosov, and let $C_0\ge1$ and $\lambda_0\in(0,1)$ be corresponding Anosov constants. The map $T_{\sigma,2}$ has no periodic points, since its first coordinate increases by $m$ under the $m$-th iterate. We claim that neither invariant bundle can be trivial. Suppose first that $E^s_{\sigma,2}=T\mathbb R^2$.

    Choose $N$ so that $q=C_0\lambda_0^N<1$. Then $\left\|DT^N_{\sigma, 2}v\right\|\leq q\left\| v\right\|$ for every tangent vector. Thus
    $$d(T^N_{\sigma, 2}x, T^N_{\sigma, 2}y)\leq q d(x, y).$$
    Since the metric is complete, Banach's fixed point theorem implies that $T^N_{\sigma, 2}$ has a fixed point, which is a contradiction. Applying the same argument to $T^{-1}_{\sigma, 2}$ excludes the possibility that its unstable bundle is all of $T\mathbb R^2$. Hence both bundles are nontrivial. Since the base has dimension two, we have
    $$\dim E^s_{\sigma, 2}=\dim E^u_{\sigma, 2}=1.$$
    Fix $k\in \{1, \cdots, n-1\}$ and write $\eta=(\eta_1, \cdots, \eta_{n-2})$. Define
    $$g_{\sigma, k}=g_{\sigma, 2}+\sum^{k-1}_{i=1} e^{-2t} d \eta^2_i+\sum^{n-2}_{i=k} e^{2t} d \eta^2_i.$$
    Lemma~\ref{lem:completeness} shows that $g_{\sigma,k}$ is complete. Define
    $$E^s_{\sigma, k}=E^s_{\sigma, 2}\oplus \operatorname{span}\{\partial_{\eta_1}, \dots, \partial_{\eta_{k-1}}\},$$
    and $$E^u_{\sigma, k}=E^u_{\sigma, 2}\oplus \operatorname{span}\{\partial_{\eta_k}, \dots, \partial_{\eta_{n-2}}\},$$
    These bundles are continuous and $DT_{\sigma}$-invariant. For $i<k$, we have
    $$\left\|DT^m_{\sigma}\partial_{\eta_i}\right\|=e^{-m}\left\|\partial_{\eta_i}\right\|.$$
    For $i\geq k$, we have
    $$\left\|DT^{-m}_{\sigma}\partial_{\eta_i}\right\|=e^{-m}\left\|\partial_{\eta_i}\right\|.$$
    Set
    $$C_1=\max\{C_0, 1\}, \qquad \lambda_1=\max\{\lambda_0, e^{-1}\}.$$
    Then $\lambda_1<1$. Since the base and fiber summands are orthogonal,
   $$\left\|DT^m_{\sigma}v\right\|\leq C_1 \lambda^{m}_1\left\|v \right\|, \qquad v\in E^s_{\sigma, k},$$
   and $$\left\|DT^{-m}_{\sigma}w\right\|\leq C_1 \lambda^{m}_1\left\|w \right\|, \qquad w \in E^u_{\sigma, k}.$$
   Thus $T_\sigma$ is Anosov with respect to $g_{\sigma, k}$ and 
   $$\dim E^s_{\sigma, k}=k, \qquad \dim E^u_{\sigma, k}=n-k.$$
   Define $g_k=H^* g_{\sigma, k}$. Then $H:(\mathbb R^n,g_k)\longrightarrow (\mathbb R^n,g_{\sigma,k})$ is an isometry. Hence $g_k$ is complete, and $f_{A, v}$ is Anosov with respect to $g_k$, with stable index k. Therefore every $k\in\{1, \dots, n-1\}$ belongs to $\operatorname{Ind}(f_{A, v})$.

   The normal forms $T_\sigma$ have no periodic points. Since periodic points are preserved under smooth conjugacy, $f_{A,v}$ also has no periodic points. If an Anosov splitting has $E^s=T\mathbb R^n$, the contradiction above implies that some iterate of $f_{A, v}$ is a strict contraction of a complete metric space, and hence has a fixed point. If $E^u=T\mathbb R^n$, the same argument applies to an inverse iterate. Both contradict the absence of periodic points. Hence indices $0$ and $n$ cannot happen. Therefore
   $$\operatorname{Ind}(f_{A, v})=\{1, \dots, n-1\}.$$
   Suppose $k<j$. Then $\partial_{\eta_k}$ is unstable for $g_{\sigma,k}$ and stable for $g_{\sigma,j}$. At $x_t=(t,0,0)$,
   $$\|\partial_{\eta_k}\|_{g_{\sigma,k},x_t}=e^t,\qquad \|\partial_{\eta_k}\|_{g_{\sigma,j},x_t}=e^{-t}.$$
   Hence $$ \frac{\|\partial_{\eta_k}\|_{g_{\sigma,k},x_t}} {\|\partial_{\eta_k}\|_{g_{\sigma,j},x_t}}=e^{2t}
   \longrightarrow\infty \qquad\text{as }t\to\infty.$$
   Thus the identity from $(\mathbb R^n,g_{\sigma,j})$ to
   $(\mathbb R^n,g_{\sigma,k})$ is not Lipschitz. Conversely, $$
   \frac{\|\partial_{\eta_k}\|_{g_{\sigma,j},x_t}} {\|\partial_{\eta_k}\|_{g_{\sigma,k},x_t}}= e^{-2t}\longrightarrow\infty \qquad\text{as }t\to-\infty.$$
   Thus the identity in the opposite direction is not Lipschitz either. Hence $g_{\sigma,k}$ and $g_{\sigma,j}$ are not uniformly equivalent. Since $$g_k=H^*g_{\sigma,k},\qquad g_j=H^*g_{\sigma,j},$$
   the same conclusion holds for $g_k$ and $g_j$.
   \end{proof}

\begin{proposition}[Metric growth on the $1$-eigenspace]
\label{prop:metric-growth}
Let $f_{A,v}(x)=Ax+v$ be an affine diffeomorphism of $\mathbb R^n$ with no fixed point. Let $g$ be a complete Riemannian metric for which $f_{A,v}$ is Anosov, with constants $C\ge1$ and $\lambda\in(0,1)$. Then, for every $0\ne w\in\ker(A-I)$ and every $x\in\mathbb R^n$, there exist $\sigma\in\{-1,1\}$ and $c=c(x, w)>0$ such that
$$ \|w\|_{g,f^{\sigma n}(x)} \ge
c\lambda^{-n}$$
for all sufficiently large $n$.
\end{proposition}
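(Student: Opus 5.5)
The plan is to exploit that $Df=A$ is constant, so $Df^m_x w = A^m w = w$ for every $m\in\mathbb Z$ and every $w\in\ker(A-I)$. The constant vector field $w$ is therefore $Df$-invariant. The statement then reduces to comparing the $g$-norm of the same vector $w$ at the points $f^{m}(x)$ with its norm at $x$, using the Anosov splitting at $x$.

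Fix $x$ and $0\ne w\in\ker(A-I)$, and decompose $w = w^s + w^u$ with $w^s\in E^s(x)$ and $w^u\in E^u(x)$. Invariance of the splitting and linearity give, for every $m\in\mathbb Z$,
$$ w = Df^m_x w = Df^m_x w^s + Df^m_x w^u, $$
with $Df^m_x w^s\in E^s(f^m x)$ and $Df^m_x w^u\in E^u(f^m x)$. There are two cases.

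\textbf{Case $w^u\neq0$.} Take $\sigma=1$. The unstable estimate applied to $Df^m_x w^u$ at $f^m(x)$ gives $\|w^u\|_x=\|Df^{-m}(Df^m w^u)\|\le C\lambda^m\|Df^m_x w^u\|$, so $\|Df^m_x w^u\|\ge C^{-1}\lambda^{-m}\|w^u\|_x$. The stable part satisfies $\|Df^m_xw^s\|\le C\lambda^m\|w^s\|_x\to0$.

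\textbf{Case $w^u=0$.} Then $w=w^s\ne0$, and we take $\sigma=-1$. The same argument with $f^{-1}$ gives $\|Df^{-m}_x w^s\|\ge C^{-1}\lambda^{-m}\|w^s\|_x$, since $\|w^s\|_x=\|Df^m(Df^{-m}w^s)\|\le C\lambda^m\|Df^{-m}w^s\|$.

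The main obstacle is passing from growth of the component to growth of the sum $w$. This requires a lower bound $\|a+b\|\ge \|b\|-\|a\|$ where $a$ is the decaying stable piece. In the first case this reads
$$ \|w\|_{f^m x}\ge C^{-1}\lambda^{-m}\|w^u\|_x - C\lambda^m\|w^s\|_x, $$
which is at least $\tfrac12 C^{-1}\|w^u\|_x\lambda^{-m}$ for $m$ large. So the triangle inequality alone suffices, and no uniform angle bound between $E^s$ and $E^u$ is needed, because the competing term decays while the main term grows. The second case is exact, since $w=w^s$ lies entirely in $E^s$, and gives $c=C^{-1}\|w\|_x$.

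The constant $c$ depends on $x$ and $w$ through $\|w^u\|_x$ or $\|w^s\|_x$. Completeness and the no-fixed-point hypothesis are not needed for the estimate itself. They only guarantee that such a metric exists, by Theorem~\ref{thm:mainresult}, and that $\ker(A-I)\neq0$ is the relevant nonhyperbolic situation.
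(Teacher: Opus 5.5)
Your proposal is correct and matches the paper's proof essentially step for step. It uses the same splitting $w=w^s+w^u$ at $x$, the forward triangle-inequality estimate when $w^u\neq 0$, and the backward stable estimate when $w^u=0$, where $A^{-m}w=w\in E^s(f^{-m}(x))$; your remark that completeness and the no-fixed-point hypothesis play no role in the estimate itself is accurate for the paper's argument as well.
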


\begin{proof}
Fix $0\ne w\in\ker(A-I)$ and $x\in\mathbb R^n$. Since $Aw=w$,
we have $A^nw=w$ for every $n\in\mathbb Z$. Write
$$ w=w^s+w^u, \qquad w^s\in E^s(x),\quad w^u\in E^u(x).
$$
Suppose first that $w^u\ne0$. The Anosov estimates give
$$ \|A^nw^u\|_{g,f^n(x)} \ge C^{-1}\lambda^{-n}\|w^u\|_{g,x}
$$
and
$$ \|A^nw^s\|_{g,f^n(x)} \le C\lambda^n\|w^s\|_{g,x}.
$$
Since $A^nw=w$,
$$ \|w\|_{g,f^n(x)} \ge C^{-1}\lambda^{-n}\|w^u\|_{g,x} - C\lambda^n\|w^s\|_{g,x}.
$$
For all sufficiently large $n$, the second term is at most one half of the first one. Hence
$$ \|w\|_{g,f^n(x)} \ge \frac{1}{2C}\lambda^{-n}\|w^u\|_{g,x}.
$$
Suppose now that $w^u=0$. Then $w\in E^s(x)$. By invariance of the stable bundle, $w\in E^s(f^{-n}(x))$, and the stable estimate from $f^{-n}(x)$ to $x$ gives 
$$ \|w\|_{g,x} = \|A^nw\|_{g,x} \le C\lambda^n\|w\|_{g,f^{-n}(x)}.
$$
Therefore
$$ \|w\|_{g,f^{-n}(x)} \ge C^{-1}\lambda^{-n}\|w\|_{g,x}
$$
for every $n\ge0$. Thus the required estimate holds along either the forward or the backward orbit.
\end{proof}

\begin{theorem}[Uniformly Euclidean rigidity]
    Let $f_{A, v}(x)=Ax+v$ be an affine diffeomorphism of $\mathbb R^n$. The following are equivalent:
    \begin{enumerate}
        \item[\emph{\text{1.}}] A is hyperbolic.
        \item[\emph{\text{2.}}]  There exists a complete Anosov metric $g$ for $f_{A, v}$ which is uniformly equivalent to the Euclidean metric. That is, there exists $L\geq 1$ such that
        $$L^{-1}|w|\leq \left\| w\right\|_{g, x}\leq L|w|$$
        for every $x\in \mathbb R^n$ and $w\in T_x\mathbb R^n$.
        \item[\emph{\text{3.}}]  There exists a constant Riemannian metric for which $f_{A, v}$ is Anosov.
    \end{enumerate}
\end{theorem}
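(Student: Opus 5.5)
I will prove the cycle $1\Rightarrow3\Rightarrow2\Rightarrow1$.

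For $1\Rightarrow3$, Lemma~\ref{lem:hyperbolic} already does the work. If $A$ is hyperbolic, the adapted norm on $E^s_A\oplus E^u_A$ comes from a constant inner product. With respect to that inner product, $f_{A,v}$ is Anosov with the constant splitting.

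For $3\Rightarrow2$, a constant Riemannian metric is given by a fixed positive definite matrix $G$. Equivalence of norms on $\mathbb R^n$ gives $L^{-1}|w|\le\|w\|_G\le L|w|$ with $L$ independent of $x$. A constant metric is also complete, since its distance is bi-Lipschitz to the Euclidean distance. So 3 implies 2 directly.

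The substantive step is $2\Rightarrow1$, and I will argue by contradiction. Suppose $A$ is not hyperbolic but $g$ is a complete Anosov metric that is $L$-uniformly equivalent to the Euclidean metric, with constants $C$ and $\lambda$. By Theorem~\ref{thm:mainresult}, or directly by the fixed-point argument in its necessity part, $f_{A,v}$ has no fixed point, so $v\notin\operatorname{Im}(I-A)$.

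I then need a nonzero vector $w\in\ker(A-I)$. Here I use linear algebra. If $1\notin\operatorname{Spec}(A)$, then $I-A$ is invertible, so $\operatorname{Im}(I-A)=\mathbb R^n$, which contradicts $v\notin\operatorname{Im}(I-A)$. Hence $1$ is an eigenvalue and such a $w$ exists. Proposition~\ref{prop:metric-growth} then gives, for some sign $\sigma$ and some $c>0$,
\[
\|w\|_{g,f^{\sigma n}(x)}\ge c\lambda^{-n}\to\infty .
\]
This is incompatible with the uniform bound $\|w\|_{g,y}\le L|w|$, which holds at every point $y$. That contradiction shows $A$ is hyperbolic.

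An alternative route avoids the proposition. Under a uniformly Euclidean metric, $DF^n=A^n$ would satisfy $|A^n u|\le L^2C\lambda^n|u|$ on $E^s(x)$, and similarly for $A^{-n}$ on $E^u(x)$. Applying this to the $A$-fixed vector $w=w^s+w^u$ gives $|w|=|A^nw|$, which is bounded in $n$. The $w^u$ part, however, grows like $\lambda^{-n}$ while the $w^s$ part decays, so $w^u=0$. Running the same argument with $A^{-n}$ gives $w^s=0$. Hence $w=0$, a contradiction. The only delicate point in this step is making sure the kernel of $A-I$ is nontrivial, which the cokernel argument above handles. Everything else is routine.
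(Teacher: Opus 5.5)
Your proof is correct and follows the paper's argument essentially verbatim. You use the same cycle $1\Rightarrow3\Rightarrow2\Rightarrow1$, and for $2\Rightarrow1$ you invoke Theorem~\ref{thm:mainresult} to get $v\notin\operatorname{Im}(I-A)$, deduce a nonzero $w\in\ker(A-I)$, and contradict the uniform bound via Proposition~\ref{prop:metric-growth}. Your alternative route is just that proposition's proof redone in Euclidean norms; it is valid, and it works for any nonzero vector with bounded two-sided $A$-orbit, so it could even bypass the fixed-point step.
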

\begin{proof}
    We prove $1\implies 3$ first. If $A$ is hyperbolic, choose an adapted inner product for the splitting $\mathbb R^n=E^s_A \oplus E^u_A$ such that $A|_{E^s_A}$ and $A^{-1}|_{E^u_A}$ are strict contractions. The associated constant metric is complete and $f_{A, v}$ is Anosov. 
    
    The implication of $3\implies 2$ follows since any two constant norms on a finite-dimensional vector space are uniformly equivalent. 

    It remains to prove $2\Longrightarrow1$. Suppose that $g$ is uniformly equivalent to the Euclidean metric and makes $f_{A,v}$ Anosov. Let $C\ge1$ and $\lambda\in(0,1)$ be corresponding Anosov constants. Assume, by contradiction, that $A$ is not hyperbolic. By Theorem~\ref{thm:mainresult}, $f_{A,v}$ then has no fixed point. In particular, $I-A$ is not invertible, so $1\in\operatorname{Spec}(A)$. Choose $0\ne w\in\ker(A-I)$.

    Fix $x\in\mathbb R^n$. By Proposition~\ref{prop:metric-growth}, there exist $\sigma\in\{-1,1\}$ and $c>0$ such that $$ \|w\|_{g,f^{\sigma n}(x)}\ge c\lambda^{-n}$$
    for all sufficiently large $n$. On the other hand, uniform equivalence with the Euclidean metric gives $$\|w\|_{g,f^{\sigma n}(x)}\le L|w| $$
    for every $n$, since $w$ is the same Euclidean vector at every base point. This is a contradiction. Therefore $A$ is hyperbolic.
\end{proof}

\section{Affine parameter geometry}\label{Affine parameter geometry}
        
\begin{lemma}[Spectrum perturbation]\label{lem:specpert}
    Let $A\in GL(n, \mathbb{R})$ and suppose that $1\in \operatorname{Spec}(A)$. Assume that at least one of the following holds:
    \begin{enumerate}
        \item[\emph{1}.] $A$ has an eigenvalue in $\mathbb{S}^1\setminus \{1\}$.
        \item[\emph{2}.] The algebraic multiplicity of the eigenvalue 1 is at least two.
    \end{enumerate}
    Then every neighborhood of $A$ in $GL(n, \mathbb{R})$ contains a real matrix $A'$ such that
    $$1\notin \operatorname{Spec}(A'), \qquad \operatorname{Spec}(A')\cap \mathbb{S}^1\neq \emptyset.$$
\end{lemma}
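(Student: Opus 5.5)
Our approach is to perturb $A$ by an explicit small real change, built from a real Jordan (or Schur-type) form, in two cases. Fix $P\in GL(n,\mathbb R)$ with $P^{-1}AP$ in real Jordan form. Since conjugation by the fixed matrix $P$ is a homeomorphism of $GL(n,\mathbb R)$ preserving spectra, it is enough to perturb the block form $J=P^{-1}AP$ and then set $A'=PJ'P^{-1}$. Any sufficiently small perturbation stays in $GL(n,\mathbb R)$, because this set is open.

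\emph{Case 1: $A$ has an eigenvalue $\mu\in\mathbb S^1\setminus\{1\}$.} Write $J=J_1\oplus J_{\mathrm{rest}}$, where $J_1$ collects the Jordan blocks of eigenvalue $1$. The blocks of $\mu$ (together with $\bar\mu$ if $\mu\notin\mathbb R$) lie in $J_{\mathrm{rest}}$ and are left unchanged. Replace $J_1$ by $(1+\varepsilon)J_1$ for small $\varepsilon>0$. The eigenvalue $1$ then becomes $1+\varepsilon\neq1$, and every other eigenvalue is unchanged; in particular none of the others equals $1$. Hence $1\notin\operatorname{Spec}(A')$, while $\mu\in\operatorname{Spec}(A')\cap\mathbb S^1$. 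The size of the perturbation is $O(\varepsilon)$.

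\emph{Case 2: the eigenvalue $1$ has algebraic multiplicity at least two.} Here the plan is to create a conjugate pair $e^{\pm i\theta}$ with $\theta$ small and nonzero. Let $V_1$ be the generalized $1$-eigenspace, of dimension $m\ge2$. In a basis adapted to a Jordan chain, $J_1$ has upper triangular form with $1$'s on the diagonal, and we pick the leading $2\times2$ corner. There are two subcases.
\begin{itemize}
\item If the corner is a $2\times2$ Jordan block $\begin{pmatrix}1&1\\0&1\end{pmatrix}$, perturb it to $\begin{pmatrix}1&1\\-\delta&1\end{pmatrix}$ with $\delta>0$ small. Its eigenvalues are $1\pm i\sqrt\delta$, of modulus $\sqrt{1+\delta}$, so they are not on the circle. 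The fix is to instead use $\begin{pmatrix}\cos\theta&1\\-\sin^2\theta&\cos\theta\end{pmatrix}$, whose trace is $2\cos\theta$ and whose determinant is $\cos^2\theta+\sin^2\theta=1$. Its eigenvalues are therefore $e^{\pm i\theta}$, and the matrix tends to the Jordan block as $\theta\to0$.
\item If the corner is $I_2$ (two $1\times1$ blocks, or leading entries of two separate blocks), replace it by the rotation $R_\theta$.
\end{itemize}
After this change, the $2\times2$ corner has eigenvalues $e^{\pm i\theta}\neq1$. The remaining $1$'s on the diagonal of $J_1$ are replaced by $1+\varepsilon$, as in Case 1. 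To keep the spectrum readable, we choose the corner to consist of the first two basis vectors of a single chain, or of the first vectors of two chains, so that the perturbed matrix remains block upper triangular with the $2\times2$ corner as a diagonal block. Its spectrum is then $\{e^{\pm i\theta}\}\cup\{1+\varepsilon\}\cup(\operatorname{Spec}(A)\setminus\{1\})$, which avoids $1$ and meets $\mathbb S^1$.

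The main obstacle is this block-triangularity bookkeeping in Case 2. We must order the basis so that the modified $2\times2$ block is an invariant diagonal block, meaning no entries below it connect it to the other vectors. In a Jordan chain basis $e_1,\dots,e_k$ with $(J-I)e_{j+1}=e_j$, the span of $e_1,e_2$ is invariant, so modifying the top-left $2\times2$ corner keeps the matrix upper block triangular. For two chains, we reorder the basis so that their first vectors come first; their span consists of eigenvectors, hence is invariant. With this ordering, the perturbation tends to $A$ as $\theta,\varepsilon\to0$, which gives the required $A'$ in every neighborhood of $A$.
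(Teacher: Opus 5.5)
Your proposal is correct and is essentially the paper's argument: in Case 1 you shift the generalized $1$-eigenspace block off $1$ while keeping the block of $\mu$, and in Case 2 you replace a $J_2(1)$ or $I_2$ corner of the real Jordan form by a $2\times 2$ block with trace $2\cos\theta$ and determinant $1$. Your $\begin{pmatrix}\cos\theta&1\\-\sin^2\theta&\cos\theta\end{pmatrix}$ plays the role of the paper's $M_\theta$, and $R_\theta$ is used for the $I_2$ corner; the remaining diagonal $1$'s are shifted off $1$, and block upper triangularity gives the spectrum (the only cosmetic differences are that you also allow $R_\theta$ on the leading vectors of two longer chains, and your Case 2 does not assume Case 1 fails).
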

\begin{proof}
     Assume that $\mu\in\operatorname{Spec}(A)\cap \bigl(\mathbb S^1\setminus\{1\}\bigr).$ Let $V_1$ be the generalized eigenspace associated with the eigenvalue $1$, and choose an $A$-invariant complement $W$. Thus $\mathbb R^n=V_1\oplus W$. Set $A_1:=A|_{V_1}$. Since $\mu \neq 1$, we have $\mu \in \operatorname{Spec}(A|_W)$. For a sufficiently small nonzero real number $\varepsilon$, define $A_{\varepsilon}$ by
    $$A_{\varepsilon}|_{V_1}=A_1+\varepsilon I, \qquad A_\varepsilon|_W=A|_W.$$
    Then $A_\varepsilon\to A$ as $\varepsilon\to 0$. Since $\operatorname{Spec}(A_1+\varepsilon I)= \{1+\varepsilon\}$,
    we have $1\notin \operatorname{Spec}(A_\varepsilon)$. On the other hand, the restriction to $W$ is unchanged, so $\mu \in \operatorname{Spec}(A_\varepsilon)\cap \mathbb{S}^1$. This proves the result in the first case.

    Now suppose that $1$ is the only eigenvalue of $A$ on the unit circle and that its algebraic multiplicity is at least two. Put the restriction of $A$ to its generalized $1$-eigenspace in real Jordan form.
    
    Suppose first that this Jordan form contains a block $J_r(1)$ with $r\ge2$. Write this block in the form
$$ J_r(1) =\begin{pmatrix}
J_2(1) & C\\
0 & J_{r-2}(1)
\end{pmatrix},
$$
where the lower-right block is omitted when $r=2$. Replace $J_2(1)$ by $$M_\theta =
\begin{pmatrix}
1&1\\
-2(1-\cos\theta)&2\cos\theta-1
\end{pmatrix}.
$$
Then $ M_\theta\longrightarrow J_2(1)$ as $\theta\to 0$. Moreover,
$ \operatorname{tr}M_\theta=2\cos\theta$ and $\det M_\theta=1. $
Hence the characteristic polynomial of $M_\theta$ is $z^2-2\cos\theta\,z+1,$
whose roots are $e^{i\theta}$ and $e^{-i\theta}$.

If $r>2$, replace the remaining diagonal block $J_{r-2}(1)$ by
$J_{r-2}(1+\delta)$, where $\delta\neq0$ is arbitrarily small, while
leaving the upper-right block $C$ unchanged. Replace every other Jordan block $J_s(1)$ by $J_s(1+\delta_s)$ with $\delta_s\neq 0$ arbitrarily small. The resulting matrix is block upper triangular, so its spectrum is the union of the spectra of its diagonal blocks. It therefore has the eigenvalues $e^{\pm i\theta}$ but no eigenvalue equal to $1$.

If all Jordan blocks associated with the eigenvalue $1$ are
one-dimensional, replace two copies of the scalar block $1$ by the
rotation block
$$R_\theta=
\begin{pmatrix}
\cos\theta&-\sin\theta\\
\sin\theta&\cos\theta
\end{pmatrix},
$$
and replace every remaining copy of $1$ by $1+\delta_s$, with
$\delta_s\neq0$ arbitrarily small.

In either case, choosing $\theta\neq0$, $\delta$, and the $\delta_s$
sufficiently small produces a real matrix arbitrarily close to the
original Jordan form, with no eigenvalue equal to $1$ and with
$e^{\pm i\theta}$ in its spectrum. Since $GL(n,\mathbb R)$ is open and
$A\in GL(n,\mathbb R)$, all perturbation parameters may also be chosen
so that the resulting matrix remains invertible.

Conjugating back to the original coordinates gives the required real matrix $A'$.
\end{proof}

\begin{proof}[\textbf{\emph{Proof of Theorem \ref{Aff Par Str}}}]
    Fix $(A, v)\in \mathcal{P}_n$. Let $\mu_1, \dots, \mu_n$ be the complex eigenvalues of $A$, counted with algebraic multiplicity. For every $r>0$, the eigenvalues of $rA$ are $r\mu_1, \dots, r\mu_n$. Thus $rA$ fails to be hyperbolic only when $r|\mu_i|=1$ for at least one $i$. Since $A$ is invertible and has only finitely many eigenvalues, there are only finitely many forbidden positive values of $r$. We can therefore choose $r_m\to 1$ such that every $r_m A$ is hyperbolic. By the classification theorem, $f_{r_mA, v}$ admits a complete Anosov metric. Hence $(r_mA, v)\in \mathcal{A}_n$ and $(r_m A, v)\to (A, v)$. Therefore $\overline{\mathcal{A}_n}=\mathcal{P}_n$.

    Hyperbolicity is an open condition in $GL(n, \mathbb{R})$. Hence if $A_0$ is hyperbolic, every matrix $A$ sufficiently close to $A_0$ is also hyperbolic. Therefore every pair $(A, v)$ sufficiently close to $(A_0, v_0)$ belongs to $\mathcal{A}_n$ independently of the perturbation of $v$. Thus
    $$\mathcal{H}_n\subset \operatorname{Int}_{\mathcal{P}_n}(\mathcal{A}_n).$$

    Let $(A_0, v_0)\in \mathcal{D}^{reg}_n$. Since 1 is an algebraically simple eigenvalue of $A_0$, standard perturbation theory gives a neighborhood $\mathcal{V}$ of $A_0$ and a continuous simple eigenvalue $A\mapsto \lambda(A)$ such that $\lambda(A_0)=1$ and $\lambda(A)$ is the unique eigenvalue of $A$ near 1. The neighborhood may be chosen so that all other eigenvalues of $A$ remain uniformly separated from the unit circle. Since $A$ is real and the eigenvalue near 1 is unique, $\lambda(A)$ is real. We distinguish two cases: If $\lambda(A)\neq 1$, then $|\lambda(A)|\neq 1$, since $\lambda(A)$ is real and sufficiently close to 1. The remaining eigenvalues are also off the unit circle. Hence $A$ is hyperbolic, so $(A, v)\in \mathcal{A}_n$. Suppose now that $\lambda(A)=1$. Since this eigenvalue is simple, the left eigenspace $\operatorname{ker}(I-A^T)$ is one-dimensional. After shrinking $\mathcal V$ to a ball, choose a continuous family of unit left eigenvectors $\ell_A$ satisfying $\ell_A \circ A=\lambda(A)\ell_A$. In the case $\lambda(A)=1$, this gives $\ell_A((I-A)x)=0.$ Hence $\operatorname{Im}(I-A)\subset\ker\ell_A.$ Since the eigenvalue $1$ is algebraically simple, $\operatorname{rank}(I-A)=n-1.$ Thus $\operatorname{Im}(I-A)$ has codimension one. Since $\ell_A$ is a nonzero linear functional, $\ker\ell_A$ also has codimension one. Therefore the inclusion above is an equality $\operatorname{Im}(I-A)=\ker\ell_A.$

    At $(A_0, v_0)$, the regular drift condition gives $v_0\notin \operatorname{Im}(I-A_0)$, or $\ell_{A_0}(v_0)\neq 0$. By continuity, $\ell_A(v)\neq 0$ for every $(A, v)$ sufficiently close to $(A_0, v_0)$. Hence $v \notin \operatorname{Im}(I-A)$. By Theorem \ref{thm:mainresult}, $f_{A, v}$ admits a complete Anosov metric. Thus every regular drift pair is an interior point
    $$\mathcal{D}^{reg}_n\subset \operatorname{Int}_{\mathcal{P}_n}(\mathcal{A}_n).$$

    Let $(A_0, v_0)\in \mathcal{A}_n$, assume that $A_0$ is not hyperbolic and suppose that $(A_0, v_0)\notin \mathcal{D}^{reg}_n$. Since $A_0$ is not hyperbolic, the classification theorem implies $v_0\notin \operatorname{Im}(I-A_0)$. It follows that $1\in \operatorname{Spec}(A_0)$, because otherwise $I-A_0$ would be invertible and $\operatorname{Im}(I-A_0)=\mathbb{R}^n$. Since $(A_0, v_0)$ is not a regular drift pair, either $A_0$ has another eigenvalue on the unit circle or the eigenvalue 1 is not algebraically simple. By Lemma \ref{lem:specpert}, there exists a sequence $A_m\to A_0$ such that $1\notin \operatorname{Spec}(A_m)$, but $\operatorname{Spec}(A_m)\cap \mathbb{S}^1\neq \emptyset$. Since $1\notin \operatorname{Spec}(A_m)$, the operator $I-A_m$ is invertible. Hence $f_{A_m, v_0}$ has a unique fixed point. At this fixed point, the derivative is $A_m$. Since $A_m$ has an eigenvalue of modulus one, the fixed point is not hyperbolic. By the fixed-point obstruction, $f_{A_m, v_0}$ cannot be Anosov with respect to any complete Riemannian metric. Thus $(A_m, v_0)\notin \mathcal{A}_n$, while $(A_m, v_0)\to (A_0, v_0)$. Therefore $(A_0, v_0)$ is not an interior point. Hence we have
    $$\operatorname{Int}_{\mathcal{P}_n}(\mathcal{A}_n)=\mathcal{H}_n \cup \mathcal{D}^{reg}_n.$$
    Since $\mathcal{A}_n$ is dense in $\mathcal{P}_n$, $\overline{\mathcal A_n}=\mathcal P_n.$ Therefore
    $$\partial_{\mathcal{P}_n}\mathcal{A}_n=\overline{\mathcal{A}_n}\setminus \operatorname{Int}_{\mathcal{P}_n}(\mathcal{A}_n)=\mathcal{P}_n\setminus (\mathcal{H}_n\cup \mathcal{D}^{reg}_n).$$
\end{proof}

\begin{corollary}[Index spectrum near a regular drift parameter]
\label{cor:index-wall-crossing}
Let $(A_0,v_0)\in\mathcal D_n^{\mathrm{reg}}$. Let $s$ be the total algebraic multiplicity of the eigenvalues of $A_0$, different from $1$, which lie inside the unit disk.

Then there exist a neighborhood $\mathcal U$ of $(A_0,v_0)$ contained in $\mathcal A_n$ and a smooth real eigenvalue branch
$A\mapsto\lambda(A)$, with $\lambda(A_0)=1$, such that
$$ \operatorname{Ind}(f_{A,v}) =
\begin{cases}
\{s+1\}, & \lambda(A)<1,\\[1mm]
\{1,\ldots,n-1\}, & \lambda(A)=1,\\[1mm]
\{s\}, & \lambda(A)>1
\end{cases}
$$
for every $(A,v)\in\mathcal U$.
\end{corollary}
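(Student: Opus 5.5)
The plan is to reuse the neighborhood $\mathcal V$ of $A_0$ from the proof of Theorem~\ref{Aff Par Str}, upgrading continuity of the eigenvalue branch to smoothness and then applying Theorem~\ref{thm:index} in each of the three cases.

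\emph{The eigenvalue branch.} Since $1$ is a simple root of $\chi_{A_0}$, we have $\partial_z\chi_A(z)|_{(A_0,1)}\neq0$. The implicit function theorem applied to $(A,z)\mapsto\det(zI-A)$ near $(A_0,1)$ gives a smooth real function $\lambda(A)$ with $\lambda(A_0)=1$. Reality follows as before: $\overline{\lambda(A)}$ is also an eigenvalue near $1$, and the eigenvalue near $1$ is unique. I will shrink $\mathcal V$ so that three things hold. First, $\lambda(A)\in(0,2)$, so the branch is real and positive. Second, the remaining $n-1$ eigenvalues of $A$ (with multiplicity) stay in a compact set disjoint from a fixed annulus $\{1-\delta\le|z|\le1+\delta\}$. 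Third, by continuity of the roots of the characteristic polynomial (Rouché on the circles $|z|=1\pm\delta$), exactly $s$ of them lie in $|z|<1-\delta$ and $n-1-s$ lie in $|z|>1+\delta$. I then take $\mathcal U\subset\mathcal V\times\mathbb R^n$ to be the neighborhood from the proof of Theorem~\ref{Aff Par Str}, shrunk so that $\ell_A(v)\neq0$ throughout. That proof already shows $\mathcal U\subset\mathcal A_n$.

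\emph{Case analysis.} If $\lambda(A)<1$, then $\lambda(A)\in(0,1)$ and all other eigenvalues are off the circle, so $A$ is hyperbolic. Its stable generalized eigenspace has dimension $s+1$, namely the $s$ eigenvalues inside plus $\lambda(A)$. Theorem~\ref{thm:index}(1) gives $\operatorname{Ind}=\{s+1\}$. If $\lambda(A)>1$, the same count gives $\dim E^s_A=s$, hence $\operatorname{Ind}=\{s\}$. If $\lambda(A)=1$, then $A$ is not hyperbolic, and by the argument in the proof of Theorem~\ref{Aff Par Str} we have $\operatorname{Im}(I-A)=\ker\ell_A$ and $\ell_A(v)\ne0$. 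So $v\notin\operatorname{Im}(I-A)$, and Theorem~\ref{thm:index}(2) gives $\{1,\dots,n-1\}$.

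\emph{Main obstacle.} The only delicate point is the uniform spectral bookkeeping: the count $s$ of eigenvalues inside the disk must stay constant on $\mathcal U$, with no eigenvalue other than the branch crossing the circle. I would handle this with the root-continuity and Rouché argument above, which is exactly what the annulus separation in the first step is for. The rest is a direct citation of earlier results.
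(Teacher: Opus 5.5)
Your proposal is correct and follows essentially the same route as the paper. Like the paper, you take $\mathcal U\subset\mathcal A_n$ from the interior-point argument of Theorem~\ref{Aff Par Str}, follow a smooth real simple eigenvalue branch while the rest of the spectrum stays on fixed sides of the circle, and apply Theorem~\ref{thm:index} in each case. The differences are minor: you make the paper's appeal to ``standard perturbation theory'' explicit via the implicit function theorem and Rouch\'e, and when $\lambda(A)=1$ you get $v\notin\operatorname{Im}(I-A)$ directly from $\ell_A(v)\neq0$ and $\operatorname{Im}(I-A)=\ker\ell_A$, whereas the paper deduces it from $(A,v)\in\mathcal A_n$ and the necessity direction of Theorem~\ref{thm:mainresult}.
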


\begin{proof}
Since $(A_0,v_0)$ is a regular drift pair, Theorem~\ref{Aff Par Str} shows that it is an interior point of $\mathcal A_n$. Hence there exists a neighborhood $\mathcal U_0$ of $(A_0,v_0)$ contained in $\mathcal A_n$.

The eigenvalue $1$ of $A_0$ is algebraically simple and all the other eigenvalues are separated from the unit circle. Standard perturbation theory therefore gives, after shrinking a neighborhood of $A_0$, a smooth real eigenvalue branch $\lambda(A)$ with $\lambda(A_0)=1$. The remaining eigenvalues stay uniformly separated from the unit circle and remain on the same side of it. Shrinking $\mathcal U_0$ if necessary, we obtain a neighborhood $\mathcal U\subset\mathcal A_n$
with these properties.

If $\lambda(A)<1$, then $A$ is hyperbolic and its stable generalized eigenspace has dimension $s+1$. By Theorem~\ref{thm:index},
$$
\operatorname{Ind}(f_{A,v})=\{s+1\}.
$$
If $\lambda(A)>1$, the distinguished eigenvalue belongs to the
unstable spectrum, so Theorem~\ref{thm:index} gives
$$
\operatorname{Ind}(f_{A,v})=\{s\}.
$$

Finally, suppose that $\lambda(A)=1$. Then $A$ is not hyperbolic.
Since $(A,v)\in\mathcal U\subset\mathcal A_n$, Theorem~\ref{thm:mainresult} implies
$$
v\notin\operatorname{Im}(I-A).
$$
The second part of Theorem~\ref{thm:index} therefore yields
$$
\operatorname{Ind}(f_{A,v}) = \{1,\ldots,n-1\}.
$$
\end{proof}

\begin{example}[An explicit family]
Let $B\in GL(n-1,\mathbb R)$ be hyperbolic and let
$s=\dim E_B^s$. For $\tau\ne 0$ and $w\in\mathbb R^{n-1}$, consider
$$ f_\varepsilon(t,z) = ((1+\varepsilon)t+\tau,Bz+w).
$$
At $\varepsilon=0$, the pair is a regular drift parameter, and the distinguished eigenvalue branch is
$$ \lambda(\varepsilon)=1+\varepsilon. $$
Hence Corollary~\ref{cor:index-wall-crossing} gives, for all
sufficiently small $|\varepsilon|$,
$$ \operatorname{Ind}(f_\varepsilon)=
\begin{cases}
\{s+1\}, & \varepsilon<0,\\[1mm]
\{1,\ldots,n-1\}, & \varepsilon=0,\\[1mm]
\{s\}, & \varepsilon>0.
\end{cases}
$$
\end{example}

\section{Stability at Infinity}\label{Stability at Infinity}
On a noncompact manifold, $C^1$ stability depends on the topology used to control perturbations at infinity. We recall the two topologies used below.

\begin{definition}[Two-sided weak and strong Whitney $C^1$ topologies]
We endow $\operatorname{Diff}^1(\mathbb R^n)$ with the two-sided weak
(resp. strong Whitney) $C^1$ topology, namely the initial topology induced by $
h\longmapsto (h,h^{-1}) $
from the product of the corresponding $C^1$ topologies on
$$
C^1(\mathbb R^n,\mathbb R^n)\times
C^1(\mathbb R^n,\mathbb R^n).
$$ A sequence $f_m\in\operatorname{Diff}^1(\mathbb R^n)$ converges to
$f\in\operatorname{Diff}^1(\mathbb R^n)$ in the two-sided weak
$C^1_{\mathrm{loc}}$ topology if, for every compact set
$K\subset\mathbb R^n$,
$$
\begin{aligned}
\sup_{x\in K}\bigl(
&|f_m(x)-f(x)|
+\|Df_m(x)-Df(x)\|\\
&+|f_m^{-1}(x)-f^{-1}(x)|
+\|D(f_m^{-1})_x-D(f^{-1})_x\|
\bigr)
\longrightarrow 0.
\end{aligned}
$$

For the two-sided strong Whitney $C^1$ topology, let $\varepsilon_+,\varepsilon_-:\mathbb R^n\to(0,\infty)$ be continuous. A basic neighborhood of $f\in\operatorname{Diff}^1(\mathbb R^n)$ consists of all $h\in\operatorname{Diff}^1(\mathbb R^n)$ such that, for every $x\in\mathbb R^n$,
$$
|h(x)-f(x)|+\|Dh_x-Df_x\|<\varepsilon_+(x)
$$
and
$$
|h^{-1}(x)-f^{-1}(x)|+ \|D(h^{-1})_x-D(f^{-1})_x\|<\varepsilon_-(x).
$$
\end{definition}

\begin{proof}[\textbf{\emph{Proof of Theorem \ref{thm:weak}}}]
By Proposition~\ref{pro:smoconju}, there exists a smooth diffeomorphism
$H:\mathbb R^n\to\mathbb R^n$ such that
$$ H\circ f_{A,v}\circ H^{-1}=T_\sigma, $$
where $ T_\sigma(t,z)=(t+1,R_\sigma z)$
and $$R_\sigma=
\begin{cases}
I_{n-1}, & \det A>0,\\
\operatorname{diag}(-1,1,\ldots,1), & \det A<0.
\end{cases}
$$
For $\varepsilon\ne0$ sufficiently small, define
$$ T_{\sigma,\varepsilon}(t,z)=((1+\varepsilon)t+1,R_\sigma z).
$$
This map has the fixed point
$$ p_\varepsilon =\left(-\frac1\varepsilon,0\right),$$
and
$$ DT_{\sigma,\varepsilon}(p_\varepsilon)=(1+\varepsilon)\oplus R_\sigma.
$$
Since $R_\sigma$ has an eigenvalue of modulus one, this fixed point is not hyperbolic. Hence $T_{\sigma,\varepsilon}$ is not
Anosov-realizable.

We have
$$ T_{\sigma,\varepsilon}(t,z)-T_\sigma(t,z)=(\varepsilon t,0),
$$
and
$$ DT_{\sigma,\varepsilon}-DT_\sigma=
\begin{pmatrix}
\varepsilon&0\\
0&0
\end{pmatrix}.
$$
Moreover,
$$ T_{\sigma,\varepsilon}^{-1}(t,z)
=\left(\frac{t-1}{1+\varepsilon},R_\sigma^{-1}z\right),
$$
so
$$T_{\sigma,\varepsilon}^{-1}(t,z)-T_\sigma^{-1}(t,z)=\left(
-\frac{\varepsilon}{1+\varepsilon}(t-1),0\right).
$$
The corresponding derivative difference also tends to zero as
$\varepsilon\to0$, and all derivatives of order at least two vanish. Therefore
$$
T_{\sigma,\varepsilon}\longrightarrow T_\sigma
$$
together with their inverses in $C^\infty$ on compact subsets.

Now define
$$h_\varepsilon=H^{-1}\circ T_{\sigma,\varepsilon}\circ H.
$$
If $h_\varepsilon$ were Anosov-realizable, Lemma~\ref{lem:smopre} applied to the smooth conjugacy $H$ would imply that $T_{\sigma,\varepsilon}$ is Anosov-realizable, a contradiction. Thus $h_\varepsilon\notin\mathscr D_n$.

Since $H$ and $H^{-1}$ are fixed smooth diffeomorphisms and send
compact sets to compact sets, the preceding convergence gives
$$ h_\varepsilon\longrightarrow f_{A,v},\qquad
h_\varepsilon^{-1}\longrightarrow f_{A,v}^{-1}
$$
in $C^\infty$ on compact subsets. By Theorem~\ref{thm:mainresult},
$f_{A,v}\in\mathscr D_n$. Hence $f_{A,v}$ is a boundary point of
$\mathscr D_n$ in the two-sided weak $C^1_{\mathrm{loc}}$ topology.

\end{proof}
\begin{lemma}[Smooth relative approximation of continuous metrics]
\label{lem:smooth-relative-metric}
Let $M$ be a smooth manifold and let $g_*$ be a continuous Riemannian
metric on $M$. For every continuous function
$\delta:M\to(0,1)$, there exists a smooth Riemannian metric
$\hat g$ such that
$$(1-\delta(x))g_{*,x}\le\hat g_x \le
(1+\delta(x))g_{*,x}$$
as quadratic forms for every $x\in M$.
\end{lemma}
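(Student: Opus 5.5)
We prove Lemma~\ref{lem:smooth-relative-metric} by a partition of unity argument combined with local mollification. The key fact is that a convex combination of quadratic forms each lying in the same band $[(1-\delta)g_*,(1+\delta)g_*]$ also lies in that band. The band depends on the point, however, so we first make every local error small relative to a lower bound for $\delta$ on each chart.

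First we fix a locally finite cover of $M$ by relatively compact coordinate charts $U_\alpha$, with $\overline{U_\alpha}$ contained in larger charts $V_\alpha$. We also fix a smooth partition of unity $\{\psi_\alpha\}$ subordinate to $\{U_\alpha\}$. On each $\overline{U_\alpha}$ the function $\delta$ is bounded below by some $\delta_\alpha>0$, by compactness. In the coordinates of $V_\alpha$, $g_*$ is a continuous map into positive definite symmetric matrices. On the compact set $\overline{U_\alpha}$, continuity and positivity give a constant $m_\alpha>0$ such that $g_{*,x}(w,w)\ge m_\alpha|w|^2$ for all $x\in\overline{U_\alpha}$ (Euclidean norm in the chart).

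Next we mollify the coefficient matrix of $g_*$ in the chart, $g_\alpha:=g_*\ast\rho_{\epsilon_\alpha}$. For $\epsilon_\alpha$ small this converges uniformly to $g_*$ on $\overline{U_\alpha}$, and it is smooth and positive definite. We choose $\epsilon_\alpha$ so that $\sup_{\overline{U_\alpha}}\|g_\alpha-g_*\|\le \delta_\alpha m_\alpha$. Then for $x\in\overline{U_\alpha}$ and every $w$,
\[
|g_{\alpha,x}(w,w)-g_{*,x}(w,w)|\le \delta_\alpha m_\alpha|w|^2\le \delta(x)\,g_{*,x}(w,w),
\]
so $(1-\delta(x))g_{*,x}\le g_{\alpha,x}\le(1+\delta(x))g_{*,x}$ on $\overline{U_\alpha}$.

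Finally we set $\hat g:=\sum_\alpha\psi_\alpha g_\alpha$. The sum is locally finite, so $\hat g$ is smooth. At each point $x$ it is a convex combination, since $\sum_\alpha\psi_\alpha(x)=1$ and only indices with $x\in\operatorname{supp}\psi_\alpha\subset U_\alpha$ contribute. Each contributing $g_{\alpha,x}$ satisfies the pointwise band inequality, so $\hat g_x$ does as well. Because $\delta<1$, the lower bound gives $\hat g_x\ge(1-\delta(x))g_{*,x}>0$, so $\hat g$ is positive definite. The main point requiring care is the uniform choice of mollification scale per chart. This is handled by compactness of $\overline{U_\alpha}$, which gives the constants $\delta_\alpha$ and $m_\alpha$; no uniformity across charts is needed because the cover is locally finite.
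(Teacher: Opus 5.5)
Your proof is correct, but it takes a different route from the paper.

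The paper proves the lemma in one step. It cites a general smooth approximation theorem for continuous sections of a fiber bundle, applies it to the bundle of positive definite symmetric bilinear forms, and uses the open neighborhood of the graph of $g_*$ cut out by the strict inequalities $(1-\delta(x))g_{*,x}<b<(1+\delta(x))g_{*,x}$. Any smooth section whose graph lies in that neighborhood is then the required $\hat g$.

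You instead prove the needed special case by hand. You mollify chartwise, choosing the scale through the compactness constants $\delta_\alpha$ and $m_\alpha$, and then glue with a partition of unity. Your gluing works because the target set is fiberwise convex: it is an order interval of quadratic forms, so convex combinations of good local approximants remain good. The cited theorem needs no such convexity and handles arbitrary open neighborhoods of the graph, which is why it is heavier machinery.

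What each approach buys:
\begin{itemize}
\item Yours is self-contained and elementary, and it is exactly as general as this application requires. It also gives the non-strict band inequality directly.
\item The paper's is shorter, but all of the content sits in the external reference.
\end{itemize}

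The details you leave implicit are routine. You need $\epsilon_\alpha$ smaller than the coordinate distance from $\overline{U_\alpha}$ to the boundary of $V_\alpha$, so that $g_\alpha$ is defined and smooth on a neighborhood of $\operatorname{supp}\psi_\alpha$. The band inequality is a statement about bilinear forms on $T_xM$, so it does not depend on the chart in which you verify it.
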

\begin{proof}
This follows from the smooth approximation theorem for continuous
sections; see \cite{Wockel}. Apply it to the bundle of positive
definite symmetric bilinear forms and to the open neighborhood of the graph of $g_*$ defined by
$$ (1-\delta(x))g_{*,x} < b < (1+\delta(x))g_{*,x}.$$
A smooth section whose graph lies in this neighborhood is the
required metric $\hat g$. The strict inequalities above imply the
non-strict inequalities stated in the lemma.
\end{proof}

\begin{lemma}[Complete adapted metric]\label{lem:commetric}
    Let $f\colon \mathbb{R}^n\to \mathbb{R}^n$ be Anosov with respect to a complete Riemannian metric $g$ with splitting $T\mathbb R^n=E^s \oplus E^u$. Then there exist a continuous Riemannian norm $\|\cdot\|_*$, a complete smooth Riemannian metric $\hat g$, a constant $q\in(0,1)$, and constants $0<c\le C<\infty$ such that:
    \begin{enumerate}
    \item[\emph{\text{1.}}] $E^s$ and $E^u$ are orthogonal with respect to
    $\|\cdot\|_*$;
    \item[\emph{\text{2.}}] $\|Df_xv^s\|_{*,f(x)} \le q\|v^s\|_{*,x}, \qquad v^s\in E^s(x);$
    \item[\emph{\text{3.}}] $\|D(f^{-1})_xv^u\|_{*,f^{-1}(x)} \le q\|v^u\|_{*,x}, \qquad v^u\in E^u(x);$
    \item[\emph{\text{4.}}] $c\|v\|_{*,x}\le \|v\|_{\hat g,x}\le C\|v\|_{*,x}$ for every $x\in\mathbb R^n$ and $v\in T_x\mathbb R^n$.
    \end{enumerate}
    Moreover, the angle between $E^s(x)$ and $E^u(x)$ measured with respect to $\hat{g}$ is bounded away from zero.
\end{lemma}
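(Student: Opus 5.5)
We will use the standard Mather-type adapted norm built from finitely many iterates, followed by a smoothing step.

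\emph{Step 1: the continuous norm.} Let $C\ge1$ and $\lambda\in(0,1)$ be the Anosov constants for $g$, and fix $N$ with $C\lambda^N\le 1/2$. For $v^s\in E^s(x)$ and $v^u\in E^u(x)$ we set
$$\|v^s\|_{s,x}=\sum_{j=0}^{N-1}\lambda^{-j}\|Df^j_xv^s\|_{g,f^j(x)},\qquad
\|v^u\|_{u,x}=\sum_{j=0}^{N-1}\lambda^{-j}\|Df^{-j}_xv^u\|_{g,f^{-j}(x)}.$$
For $v=v^s+v^u$ we put $\|v\|_{*,x}^2=\|v^s\|_{s,x}^2+\|v^u\|_{u,x}^2$. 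These are norms coming from inner products, because each summand is the pullback of $g$ under an injective linear map. The resulting metric is continuous, since $E^s$ and $E^u$ are continuous and $f$ is $C^1$. It makes $E^s\perp E^u$ by construction, which gives item 1.

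The usual telescoping computation gives
$$\|Df_xv^s\|_{s,f(x)}=\lambda\bigl(\|v^s\|_{s,x}-\|v^s\|_{g,x}\bigr)+\lambda^{-(N-1)}\|Df^N_xv^s\|_{g}.$$
By the choice of $N$ we have $\lambda^{-(N-1)}\|Df^Nv^s\|\le \lambda\cdot\tfrac12\|v^s\|_g$, so $\|Df_xv^s\|_{s}\le\lambda\|v^s\|_s$. The same computation applies to $f^{-1}$ on $E^u$. Hence items 2 and 3 hold with $q=\lambda$; if needed we take $q$ slightly larger than $\lambda$.

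\emph{Step 2: comparison with $g$, which is the main obstacle.} For the norms $\|\cdot\|_{s}$ and $\|\cdot\|_u$ the comparison is direct. For each $j<N$ we have $\|Df^jv^s\|_g\le C\lambda^j\|v^s\|_g$, so
$$\|v^s\|_g\le\|v^s\|_s\le NC\|v^s\|_g,$$
and similarly on $E^u$. These are uniform constants, independent of $x$.

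The difficulty is the passage to the full space, that is, controlling $\|v^s\|_g+\|v^u\|_g$ by $\|v\|_g$. This requires the $g$-angle between $E^s$ and $E^u$ to be bounded below uniformly on the noncompact $\mathbb R^n$. On a compact manifold this follows from continuity, but here it is not automatic. My first attempt would be the standard argument that a small angle contradicts the estimates. Take unit vectors $v^s,v^u$ with $\|v^s-v^u\|_g=\varepsilon$ and push them forward by $Df^m$. Then $\|Df^mv^u\|\ge C^{-1}\lambda^{-m}$ while $\|Df^mv^s\|\le C\lambda^m$. However, $\|Df^m(v^s-v^u)\|$ is only bounded by a product of $\|Df\|$ along the orbit, and this is not uniformly controlled without bounded-derivative hypotheses.

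If that approach fails, I would abandon comparison with $g$ and compare only with $\|\cdot\|_*$. Item 4 asks only for $\hat g$ to be comparable to $\|\cdot\|_*$, not to $g$. So the plan is to take $\hat g$ close to $\|\cdot\|_*$ and to prove completeness of $\|\cdot\|_*$ separately.

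\emph{Step 3: completeness of $\|\cdot\|_*$.} I would obtain a lower bound $\|v\|_*\ge c'\|v\|_g$ from an inequality of the form $\|v^s+v^u\|_g\le\|v^s\|_g+\|v^u\|_g\le \sqrt2\,\|v\|_*$. This uses only the triangle inequality and needs no angle bound. Thus every $\|\cdot\|_*$-length dominates $1/\sqrt2$ times the corresponding $g$-length. A $*$-Cauchy sequence is therefore $g$-Cauchy, so it converges in the manifold topology, and hence in $d_*$ as well, since $d_*$ also induces the manifold topology. So $\|\cdot\|_*$ is complete.

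\emph{Step 4: smoothing.} Apply Lemma~\ref{lem:smooth-relative-metric} with constant $\delta=1/2$ to obtain a smooth metric $\hat g$ with $\tfrac12 g_*\le\hat g\le\tfrac32 g_*$. This gives item 4 with $c=1/\sqrt2$ and $C=\sqrt{3/2}$. Completeness of $\hat g$ follows because $\hat g$ dominates $\tfrac12 g_*$, which is complete by Step 3.

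Finally, the angle claim holds because $E^s\perp E^u$ in $g_*$ and $\hat g$ is uniformly equivalent to $g_*$. Concretely, a unit $v^s$ and unit $v^u$ in $\hat g$ satisfy $\|v^s-v^u\|_{\hat g}\ge c\|v^s-v^u\|_*\ge c\sqrt{\|v^s\|_*^2+\|v^u\|_*^2}\ge c\sqrt2/C$, which bounds the angle uniformly away from zero.
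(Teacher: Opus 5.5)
Your overall architecture matches the paper's. You build an adapted norm from iterates separately on $E^s$ and $E^u$ and declare the two bundles orthogonal. The zeroth terms give $\|v\|_g\le\|v^s\|_g+\|v^u\|_g\le\sqrt2\,\|v\|_*$, so no $g$-angle bound is needed. You then smooth via Lemma~\ref{lem:smooth-relative-metric}, get completeness by domination, and get the angle bound from $*$-orthogonality. Steps 3 and 4 and your angle computation are fine. Step 1, however, fails as written, in two independent ways.

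\emph{The norm is not Riemannian.} $\|v^s\|_{s,x}=\sum_j\lambda^{-j}\|Df^j_xv^s\|_g$ is a sum of Euclidean norms, not of quadratic forms. Each summand is indeed a pulled-back Euclidean norm, but a sum of such norms is in general not induced by an inner product. Already $\sqrt{x^2+y^2}+\sqrt{x^2+4y^2}$ violates the parallelogram law for $e_1,e_2$. So your $\|\cdot\|_*$ is only a continuous Finsler norm, whereas the lemma asks for a Riemannian one. Consequently Lemma~\ref{lem:smooth-relative-metric}, which approximates continuous sections of the bundle of positive definite bilinear forms, cannot be invoked in Step 4. You must add squares, i.e.\ use $\sum_j\lambda^{-2j}\|Df^j_xv^s\|_g^2$.

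\emph{The contraction estimate does not follow.} The condition $C\lambda^N\le\frac12$ gives $\|Df^N_xv^s\|_g\le\frac12\|v^s\|_g$. Hence it gives only $\lambda^{-(N-1)}\|Df^N_xv^s\|_g\le\frac12\lambda^{-(N-1)}\|v^s\|_g$, not $\frac{\lambda}{2}\|v^s\|_g$. Using the Anosov bound directly gives $\lambda^{-(N-1)}\|Df^N_xv^s\|_g\le C\lambda\|v^s\|_g$. Your telescoping identity then yields only $\|Df_xv^s\|_s\le\lambda\|v^s\|_s+(C-1)\lambda\|v^s\|_g$, a factor that can be as large as $C\lambda$, which may exceed $1$. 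Declaring $q$ slightly larger than $\lambda$ does not close this; you need an actual bound on the last term.

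The standard repair, which also cures the first defect, is to weight at an intermediate rate. Choose $\mu\in(\lambda,1)$ and $N$ with $C(\lambda/\mu)^N\le1$, and set $\|v^s\|_{s,x}^2=\sum_{j=0}^{N-1}\mu^{-2j}\|Df^j_xv^s\|_g^2$. Then
\[
\|Df_xv^s\|_{s,f(x)}^2=\mu^2\bigl(\|v^s\|_{s,x}^2-\|v^s\|_{g,x}^2\bigr)+\mu^{-2(N-1)}\|Df^N_xv^s\|_g^2\le\mu^2\|v^s\|_{s,x}^2,
\]
so $q=\mu$, and symmetrically on $E^u$.

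Keeping the rate $\lambda$ is also possible. Apply the Anosov bound from each intermediate point $f^j(x)$ and average; for the squared sum this gives $q=\lambda\sqrt{1+C^2/N}$. That is a different argument from yours, and it needs $N>\lambda^2C^2/(1-\lambda^2)$.

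The paper avoids choosing $N$ by using the infinite series $\|v^s\|_{*,x}^2=\sum_{m\ge0}\alpha^{-2m}\|Df^m_xv^s\|_g^2$ with $\lambda<\alpha<1$. There an index shift gives $\|Df_xv^s\|_*\le\alpha\|v^s\|_*$ in one line. The price is a uniform-convergence check for continuity, which your corrected finite sum gets for free.
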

\begin{proof}
    Let $C_0\geq1$ and $\lambda_0\in(0,1)$ be Anosov constants for $g$. Choose a number $\alpha$ such that $\lambda_0<\alpha<1$. For $v^s \in E^s(x)$, define
    $$ \| v^s \|_{*,x}^2 = \sum_{m=0}^{\infty} \alpha^{-2m} \| D(f^m)_x v^s \|_{g, f^m(x)}^2.$$
    For $v^u \in E^u(x)$, define
    $$\| v^u \|_{*,x}^2 = \sum_{m=0}^{\infty} \alpha^{-2m} \| D(f^{-m})_x v^u \|_{g, f^{-m}(x)}^2.$$
    Declare $E^s(x)$ and $E^u(x)$ to be orthogonal and define
    $$\| v^s + v^u \|_{*,x}^2 = \| v^s \|_{*,x}^2 + \| v^u \|_{*,x}^2.$$
   The Anosov estimates give 
   $$\alpha^{-2m}\|D(f^m)_xv^s\|_{g,f^m(x)}^2 \le C_0^2 \left(\frac{\lambda_0}{\alpha}\right)^{2m} \|v^s\|_{g,x}^2. $$
   and the analogous estimate on $E^u$. Since $\lambda_0/\alpha<1$, both series converge uniformly on the $g$-unit bundles of $E^s$ and $E^u$. Therefore $\|\cdot\|_*$ is a continuous Riemannian norm. For $v^s \in E^s(x)$, we have
   $$\| Df_x v^s \|_{*, f(x)}^2 = \sum_{m=0}^{\infty} \alpha^{-2m} \| D(f^{m+1})_x v^s \|_g^2
= \alpha^2 \sum_{m=1}^{\infty} \alpha^{-2m} \| D(f^m)_x v^s \|_g^2
\leq \alpha^2 \| v^s \|_{*, x}^2.
$$
Hence $ \| Df_x v^s \|_{*, f(x)} \leq \alpha \| v^s \|_{*, x}.$
Similarly, $\|D(f^{-1})_x v^u\|_{*,f^{-1}(x)}\le\alpha\|v^u\|_{*,x}.$ Set $q:=\alpha$. Then $q\in (0, 1)$ and the preceding two inequalities give items (2) and (3). The zeroth terms in the two defining series give
$$
\| v^s \|_{*, x} \geq \| v^s \|_{g, x}, \quad \| v^u \|_{*, x} \geq \| v^u \|_{g, x}.
$$
Therefore, for $v = v^s + v^u$, $$
\| v \|_{g, x} \leq \| v^s \|_{g, x} + \| v^u \|_{g, x}
\leq \sqrt{2} \| v \|_{*, x}.$$
Thus, $\| v \|_{*, x} \geq \frac{1}{\sqrt{2}} \| v \|_{g, x}.$ 

The norm $\|\cdot\|_*$ is continuous and does not need to be smooth. Choose a constant $\delta\in(0,1)$. By Lemma~\ref{lem:smooth-relative-metric}, there exists a smooth Riemannian metric $\hat g$ satisfying
\begin{equation}\label{eq:normg}
(1-\delta) \| v \|_*^2 \leq \| v \|_{\hat{g}}^2 \leq (1+\delta) \| v \|_*^2. 
\end{equation}
Thus we may take
$$c=\sqrt{1-\delta}, \qquad C=\sqrt{1+\delta}. $$
Moreover, since $\|v\|_{*,x}\ge\frac{1}{\sqrt2}\|v\|_{g,x}$, we have
$\hat g \geq \frac{1-\delta}{2}\,g$. So $\hat{g}$ is complete. Finally, $E^s$ and $E^u$ are orthogonal in the $*$-metric, and $\hat{g}$ is uniformly equivalent to that metric. Therefore their $\hat{g}$-angle is uniformly bounded away from zero.

\end{proof}

\begin{proof}[\textbf{\emph{Proof of Theorem \ref{thm:whit}}}]
By Lemma~\ref{lem:commetric}, there exist a continuous
adapted norm $\|\cdot\|_*$, a complete smooth Riemannian metric
$\hat g$, and a constant $q\in(0,1)$ and constants $0<c\le C$ satisfying properties (1)-(4) of that lemma.

We record the uniform graph-transform estimates for $f$. For every $x\in \mathbb R^n$, write 
$$S_x:= Df_x|_{E^s(x)}, \qquad U_x:=Df_x|_{E^u(x)}.$$
By the one-step estimates in Lemma~\ref{lem:commetric}, we have $\left\|S_x\right\|_*\leq q$ and $\left\|U^{-1}_x\right\|_*\leq q$. 
The latter is equivalent to $$
\|U_xu\|_{*,f(x)}
\ge
q^{-1}\|u\|_{*,x},
\qquad
u\in E^u(x).$$
Choose $a\in (0, 1)$ sufficiently small that 
$$\mu_*:=\frac{1}{q\sqrt{1+a^2}}>1.$$
At each $x$, consider the closed ball
$$\mathcal{B}^u_x(a)=\{L:E^u(x)\longrightarrow E^s(x):\left\|L\right\|_*\leq a\}.$$
The graph of $L\in \mathcal{B}^u_x(a)$ is
$$\operatorname{graph}L=\{Lu+u: u\in E^u(x)\}.$$
Since $Df_x$ preserves $E^s$ and $E^u$, it sends this graph to the graph of $\Gamma_{f,x}^{u}(L) = S_x L U_x^{-1}$
Therefore, for $L_1, L_2\in \mathcal{B}^u_x(a)$,
\begin{equation}\label{eq:boundl}
\left\| \Gamma_{f,x}^{u}(L_1) - \Gamma_{f,x}^{u}(L_2)\right\|_* \leq q^2 \left\| L_1 - L_2 \right\|_*.
\end{equation}
Since $\Gamma_{f,x}^u(0)=0$, it follows that
$$\|\Gamma_{f,x}^u(L)\|_*\le q^2\|L\|_* \le q^2a<a,$$
for every $L\in\mathcal B_x^u(a)$. Thus the graph transform maps
$\mathcal B_x^u(a)$ strictly into itself. Moreover, if $v=Lu+u\in \operatorname{graph}L$, then orthogonality of $E^s(x)$ and $E^u(x)$ gives 
$$\|v\|_{*,x}^2 = \|Lu\|_{*,x}^2+\|u\|_{*,x}^2 \le (1+a^2)\|u\|_{*,x}^2. $$
Moreover, $Df_xv=S_xLu+U_xu,$ and these two summands belong to the orthogonal spaces $E^s(f(x))$ and $E^u(f(x))$, respectively. Therefore
$$\begin{aligned}
\|Df_xv\|_{*,f(x)}
&\ge \|U_xu\|_{*,f(x)}\\
&\ge q^{-1}\|u\|_{*,x}\\
&\ge \frac{1}{q\sqrt{1+a^2}}\|v\|_{*,x}\\
&= \mu_*\|v\|_{*,x}.
\end{aligned} $$
The same argument applied to $Df^{-1}$ gives a stable graph transform on maps
$$L:E^s(x)\longrightarrow E^u(x), \qquad \left\|L\right\|_* \leq a,$$
whose graph transform contracts by $q^2$, while vectors in the
corresponding graphs are expanded by at least $\mu_*$ under
$Df^{-1}$.

Choose constants $\vartheta$ and $\mu$ satisfying $q^2<\vartheta<1$ and $1<\mu <\mu_*$.  
Fix $x\in\mathbb R^n$. Let $y$ be sufficiently close to $f(x)$, and 

let $\mathcal L:T_x\mathbb R^n\longrightarrow T_y\mathbb R^n$
be a linear isomorphism sufficiently close to $Df_x$. With respect to
the decompositions
$$
T_x\mathbb R^n=E^s(x)\oplus E^u(x),
\qquad
T_y\mathbb R^n=E^s(y)\oplus E^u(y),
$$
write $$
\mathcal L=
\begin{pmatrix}
\mathcal L_{ss}&\mathcal L_{su}\\
\mathcal L_{us}&\mathcal L_{uu}
\end{pmatrix}.$$

For $L\in\mathcal B_x^u(a)$ and $u\in E^u(x)$, we have
$$ \mathcal L(Lu+u)=(\mathcal L_{ss}L+\mathcal L_{su})u+
(\mathcal L_{us}L+\mathcal L_{uu})u.$$
Provided that $\mathcal L_{us}L+\mathcal L_{uu}$
is invertible, the image of $\operatorname{graph}L$ is the graph of
\begin{equation}\label{eq:lfor}
\Gamma_{\mathcal L}^u(L)=
(\mathcal L_{ss}L+\mathcal L_{su})
(\mathcal L_{us}L+\mathcal L_{uu})^{-1}.
\end{equation}
At $y=f(x)$ and $\mathcal L=Df_x$, the off-diagonal blocks vanish,
and hence $\Gamma_{\mathcal L}^u(L)= S_xLU_x^{-1}.$
The expression in \eqref{eq:lfor}, as well as its derivative with
respect to $L$, depends continuously on the point $y$, the linear map $\mathcal L$, and the graph map $L$ whenever the second factor is invertible. For $\mathcal L=Df_x$, we have $$\|\Gamma_{\mathcal L}^u(L)\|_* \le q^2 a<a, \qquad \|D_L\Gamma_{\mathcal L}^u\| \le q^2<\vartheta, $$
and $$\|\mathcal L v\|_{*,f(x)}\ge \mu_*\|v\|_{*,x}>\mu\|v\|_{*,x}$$
for every $L\in\mathcal B_x^u(a)$ and every $v\in\operatorname{graph}L$.

The expansion estimate also depends on the vector in the graph. For this purpose, consider
$$\mathcal K_x^u(a):=\left\{(L,v):L\in\mathcal B_x^u(a),\
v\in\operatorname{graph}L,\
\|v\|_{*,x}=1 \right\}.$$
This set is compact. Since the graph-transform inequalities are strict on $\mathcal B_x^u(a)$ and the expansion inequality is strict on $\mathcal K_x^u(a)$, there exists $\delta_+(x)>0$ such that, whenever
$$
|y-f(x)|+\|\mathcal L-Df_x\|<\delta_+(x),
$$
the operator $\mathcal L_{us}L+\mathcal L_{uu}$ is invertible for every $L\in\mathcal B_x^u(a)$,
$$
\Gamma_{\mathcal L}^u(L)\in\mathcal B_y^u(a),\qquad\|D_L\Gamma_{\mathcal L}^u\|\le\vartheta,
$$
and
$$ \|\mathcal L v\|_{*,y}\ge\mu\|v\|_{*,x}$$
for every $L\in\mathcal B_x^u(a)$ and every $v\in\operatorname{graph}L$.

Since the graph ball is convex, the mean-value inequality gives
$$\|\Gamma_{\mathcal L}^u(L_1)-\Gamma_{\mathcal L}^u(L_2)\|_*\le
\vartheta\|L_1-L_2\|_*$$
for all $L_1,L_2\in\mathcal B_x^u(a)$.

Applying the same argument to $f^{-1}$, for every
$x\in\mathbb R^n$ there exists a positive inverse tolerance
$\delta_-(x)$ such that the corresponding stable graph-transform
estimates hold.

For each $x\in\mathbb R^n$, let $r_+(x)$ be the supremum of the
numbers $0<r\leq 1$ for which the preceding forward estimates hold whenever
$$
|y-f(x)|+\|\mathcal L-Df_x\|<r.
$$
Define $r_-(x)$ analogously for $f^{-1}$. Since $\delta_+(x)>0$ and $\delta_-(x)>0$, both functions are positive. The strict inequalities above and the continuous dependence on the
base point imply that $r_+$ and $r_-$ are lower semicontinuous.

For $\sigma\in\{+,-\}$, define
$$
\varepsilon_\sigma(x)=
\frac12\inf_{y\in\mathbb R^n}
\bigl(r_\sigma(y)+|x-y|\bigr).
$$
The function $\varepsilon_\sigma$ is continuous, since
$$
|\varepsilon_\sigma(x)-\varepsilon_\sigma(x')|
\le
\frac12|x-x'|.
$$
It is also positive. Indeed, the positivity and lower
semicontinuity of $r_\sigma$ give a positive lower bound for
$r_\sigma$ near $x$, while $|x-y|$ is bounded away from zero outside a sufficiently small neighborhood of $x$. Finally, taking $y=x$ in the infimum gives
$$
0<\varepsilon_\sigma(x) \le \frac12 r_\sigma(x) < r_\sigma(x). $$

Thus all the preceding estimates hold whenever
$$ |h(x)-f(x)|+\|Dh_x-Df_x\|<\varepsilon_+(x) $$
and
$$ |h^{-1}(x)-f^{-1}(x)| +\|D(h^{-1})_x-D(f^{-1})_x\| <\varepsilon_-(x).$$

Let $\mathcal U$ be the resulting two-sided strong Whitney $C^1$
neighborhood of $f$. Fix $h\in \mathcal{U}$, we construct its unstable bundle. Let $\mathcal{X}^u_a$ be the space of the continuous bundle maps $L:E^u \longrightarrow E^s$ such that $\left\|L\right\|_{\infty}:=\sup_{x\in \mathbb{R}^n}\left\|L_x\right\|_*\leq a.$

Equipped with the uniform distance, $\mathcal{X}^u_a$ is a complete metric space. Indeed, a uniformly Cauchy sequence of continuous bundle maps has a uniformly convergent limit, and the limit is continuous. For $L\in \mathcal{X}^u_a$, the derivative $Dh_x$ maps graph $L_x$ to a graph over $E^u(h(x))$. Define $\Gamma^u_h L$ by
$$Dh_x(\operatorname{graph}L_x)=\operatorname{graph}((\Gamma^u_h L)_{h(x)}).$$
Since $h$ is bijective, this defines a bundle map over every point of $\mathbb R^n$. The choice of $\mathcal{U}$ implies that $\Gamma^u_h: \mathcal{X}^u_a\longrightarrow \mathcal{X}^u_a$ and
$$\left\|\Gamma^u_h L_1-\Gamma^u_h L_2\right\|_{\infty}\leq \vartheta \left\|L_1-L_2\right\|_{\infty}.$$
Since $\vartheta<1$, Banach's fixed point theorem gives a unique fixed point $L^u_h\in \mathcal{X}^u_a$. Define $E^u_h(x):=\operatorname{graph} L^u_h(x)$. The fixed-point equation implies $Dh_x E^u_h(x)=E^u_h(h(x))$. Thus $E^u_h$ is a continuous $Dh$-invariant subbundle and $\dim E^u_h=\dim E^u=n-k$. 

Applying the same construction to $h^{-1}$ gives a continuous $Dh$-invariant stable bundle $E^s_h(x)=\operatorname{graph}L^s_h (x)$, where $L^s_h: E^s\to E^u$ and $\left\|L^s_h\right\|_{\infty}\leq a$. Hence $\dim E^s_h=\dim E^s=k$. Because $a<1$, the two graph bundles are transverse. Suppose that a vector belongs to both. Then it can be written both as $u+L^u_h u$ and as $s+L^s_h s$. Comparing the $E^s$ and $E^u$ components gives
$$\left\|s\right\|_* \le a \left\|u\right\|_*, \qquad \left\|u\right\|_* \le a \left\|s\right\|_*. $$
Thus $\left\|u\right\|_*\leq a^2 \left\|u\right\|_*$. Since $a<1$, this implies $u=0$ and $s=0$. Thus $E^s_h(x)\cap E^u_h(x)=\{0\}$. Their dimensions add to $n$, so $T_x\mathbb R^n =E^s_h(x)\oplus E^u_h(x)$ for every $x$. 

We now prove the hyperbolic estimates. By the choice of $\mathcal{U}$, every vector $v^u\in E^u_h(x)$ satisfies $\| Dh_x v^u \|_{*, h(x)} \ge \mu \| v^u \|_{*, x}.$
Then we iterate and obtain $\| D(h^m)_x v^u \|_{*, h^m(x)} \ge \mu^m \| v^u \|_{*, x}.$
That is, for $w^u \in E_h^u(h^m(x))$, we have
\begin{equation}\label{eq:newmu}
    \| D(h^{-m})_{h^m(x)} w^u \|_{*, x} \le \mu^{-m} \| w^u \|_{*, h^m(x)}
\end{equation}
Similarly, the inverse graph-transform estimates imply
\begin{equation}\label{eq:newv}
    \| D(h^m)_x v^s \|_{*, h^m(x)} \le \mu^{-m} \| v^s \|_{*, x}
\end{equation}
for $v^s \in E_h^s(x)$. Finally, by \eqref{eq:normg}, \eqref{eq:newmu} and \eqref{eq:newv},
$$\| D(h^m)_x v^s \|_{\hat{g}, h^m(x)} \le \frac{C}{c} \mu^{-m} \| v^s \|_{\hat{g}, x},$$
and
$$\| D(h^{-m})_x v^u \|_{\hat{g}, h^{-m}(x)} \le \frac{C}{c} \mu^{-m} \| v^u \|_{\hat{g}, x}. $$
Thus $h$ is Anosov with respect to the same complete metric $\hat{g}$ with Anosov constants 
$$C_*=\frac{C}{c}, \lambda_*=\mu^{-1}<1,$$
which are independent of $h\in \mathcal U$. The dimensions of its stable and unstable bundles are $k$ and $n-k$, respectively

\end{proof}
\begin{corollary}[Compactly supported nonlinear Anosov perturbations] Let $f=f_{A, v}$ be an affine diffeomorphism of $\mathbb R^n$ which is Anosov with respect to a complete Riemannian metric. Then every neighborhood of $f$ in the two-sided strong Whitney $C^1$ topology contains a non-affine diffeomorphism $h$ with the following properties:
\begin{enumerate}
    \item[\emph{\text{1.}}]  $h=f$ outside a compact subset of $\mathbb R^n$;
    \item[\emph{\text{2.}}] $h$ is Anosov with respect to a complete Riemannian metric;
    \item[\emph{\text{3.}}] $h$ has the same stable index as $f$.
\end{enumerate}
More precisely, $h$ may be chosen to be Anosov with respect to the complete adapted metric $\hat{g}$.
\end{corollary}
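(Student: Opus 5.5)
The plan is to combine Theorem~\ref{thm:whit} with an explicit compactly supported bump perturbation. First, apply Theorem~\ref{thm:whit} to $f=f_{A,v}$. This produces:
\begin{itemize}
\item the complete smooth metric $\hat g$ of Lemma~\ref{lem:commetric};
\item a two-sided strong Whitney $C^1$ neighborhood $\mathcal U$ of $f$, determined by continuous positive tolerance functions $\varepsilon_\pm$.
\end{itemize}
Every $h\in\mathcal U$ is Anosov with respect to $\hat g$, with stable index $k=\dim E^s$. Given an arbitrary strong Whitney neighborhood $\mathcal W$ of $f$, it therefore suffices to find a non-affine $h\in\mathcal U\cap\mathcal W$ with $h=f$ off a compact set. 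Intersecting, we may assume $\mathcal W\subset\mathcal U$ is a basic neighborhood defined by continuous functions $\varepsilon'_\pm>0$.

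For the construction, fix a smooth bump $\beta\ge0$ supported in the closed unit ball $\overline{B}(0,1)$ with $\beta\not\equiv0$, and a fixed nonzero vector $e$. Set
$$\psi_s(x)=x+s\beta(x)e, \qquad h_s=\psi_s\circ f .$$
The map $\psi_s$ is a diffeomorphism for $|s|\,\|D\beta\|_{\infty}<1$, since it is then a $C^1$-small perturbation of the identity with compact support. Hence $h_s$ is a diffeomorphism. It agrees with $f$ outside the compact set $K=f^{-1}(\overline{B}(0,1))$. It is not affine for $s\ne0$: $Dh_s=(I+s\,e\,D\beta)A$ is nonconstant, because $D\beta$ is not identically zero.

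It remains to check that $h_s\in\mathcal W$ for small $s$. The relevant differences are
\begin{align*}
h_s-f&=s\,(\beta\circ f)\,e,\\
Dh_s-Df&=s\,e\,D\beta(f(x))A.
\end{align*}
Both vanish off $K$ and are $O(s)$ uniformly. For the inverse, $h_s^{-1}=f^{-1}\circ\psi_s^{-1}$, and $\psi_s^{-1}$ equals the identity off $\overline{B}(0,1)$. So $h_s^{-1}-f^{-1}$ and its derivative are supported in $\overline{B}(0,1)$. They are also $O(s)$, because $\psi_s^{-1}\to\mathrm{id}$ in $C^1$ as $s\to0$, by the inverse function theorem with uniform bounds.

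The Whitney tolerances only need to be checked on compact sets, since elsewhere the differences vanish. On those compact sets, $\varepsilon'_\pm$ have positive minima. Choosing $s$ small enough then gives $h_s\in\mathcal W\subset\mathcal U$. Theorem~\ref{thm:whit} now yields that $h_s$ is Anosov with respect to $\hat g$, with $\dim E^s_{h_s}=k$ equal to the stable index of $f$.

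The main obstacle is the uniform $C^1$ control of $\psi_s^{-1}-\mathrm{id}$, needed for the inverse tolerance $\varepsilon'_-$. I will handle it by the identity
$$D(\psi_s^{-1})=\bigl(I+s\,e\,D\beta\bigr)^{-1}\circ\psi_s^{-1},$$
together with the bound $|\psi_s^{-1}(x)-x|\le |s|\,\|\beta\|_\infty$. The rest follows directly from Theorem~\ref{thm:whit}.
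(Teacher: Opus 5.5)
Your proposal is correct and follows essentially the same route as the paper: you intersect with the neighborhood $\mathcal U$ from Theorem~\ref{thm:whit}, compose $f$ with a compactly supported $C^1$-small perturbation of the identity, and check the Whitney tolerances only on the two compact sets where $h-f$ and $h^{-1}-f^{-1}$ (with their derivatives) are supported. The differences are cosmetic. The paper uses the time-$\tau$ flow $\Phi_\tau$ of a compactly supported vector field, so that invertibility and $\Phi_\tau^{-1}=\Phi_{-\tau}$ come for free, and it proves non-affineness by noting that an affine $h_\tau-f$ vanishing on an open set must vanish identically. Your explicit map $x\mapsto x+s\beta(x)e$ instead needs the small-$s$ invertibility check, whose bound should read $|s|\,\|D\beta\|_\infty|e|<1$ unless $e$ is a unit vector, and you get non-affineness from the nonconstant derivative.
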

\begin{proof}
    Let $\mathcal V$ be an arbitrary two-sided strong Whitney $C^1$ neighborhood of $f$, and let $\mathcal U$ be the neighborhood furnished by Theorem~\ref{thm:whit}. Since $\mathcal V\cap\mathcal U$ is a neighborhood of $f$, there exist positive continuous functions
    $$\varepsilon_+,\varepsilon_-:\mathbb R^n\longrightarrow(0,\infty)$$
    such that the basic neighborhood $\mathcal W$ determined by $\varepsilon_+$ and $\varepsilon_-$ satisfies $\mathcal W\subset\mathcal V\cap\mathcal U.$

    Choose a nonzero smooth compactly supported vector field $X$ on $\mathbb{R}^n$. Since $X$ is compactly supported, it is complete. Let $\Phi_\tau:\mathbb R^n \to \mathbb R^n$ denote its time-$\tau$ flow. Let $K:=\operatorname{supp} X$. For every $\tau$, $\Phi_\tau$ is a smooth diffeomorphism and $\Phi_\tau=\operatorname{id}$ on $\mathbb R^n \setminus K$. Moreover, $\Phi_\tau\to \operatorname{id}$ in $C^1$ on compact sets as $\tau\to 0$ and $\Phi_\tau^{-1}=\Phi_{-\tau}$. Define $h_{\tau}:=\Phi_\tau\circ f$. If $x\notin f^{-1}(K)$, then $f(x)\notin K$, so $\Phi_\tau(f(x))=f(x)$. Hence $h_\tau(x)=f(x)$ outside the compact set $f^{-1}(K)$. The set $f^{-1}(K)$ is compact because $f$ is a homeomorphism. The inverse map is $h^{-1}_\tau=f^{-1}\circ \Phi_{-\tau}$. Thus $h^{-1}_\tau=f^{-1}$ outside $K$. We claim that $h_{\tau}\to f$ in the two-sided strong Whitney $C^1$ topology as $\tau\to 0$.

    The forward maps and derivatives agree exactly outside the fixed compact set $f^{-1}(K)$. On $f^{-1}(K)$, the convergence $\Phi_\tau \circ f \to f$ holds in the $C^1$ topology. Similarly, the inverse map and derivatives agree exactly outside $K$, while on $K$, $f^{-1}\circ \Phi_{-\tau}\to f^{-1}$ in $C^1$.

    Since $f^{-1}(K)$ and $K$ are compact,
    $$\min_{f^{-1}(K)} \varepsilon_+>0, \qquad \min_K \varepsilon_->0.$$
    Uniform $C^1$ convergence on these two compact sets implies that $h_\tau\in \mathcal{W}$ for all sufficiently small $|\tau|$. 
    
    Choose $x_0\in \mathbb R^n$ such that $X(x_0)\neq 0$. The orbit curve $\tau \longmapsto \Phi_\tau(x_0)$ satisfies
    $$\left.\frac{d}{d \tau}\right|_{\tau=0} \Phi_\tau(x_0) = X(x_0) \neq 0.$$
    Choose a linear functional $\ell:\mathbb R^n\longrightarrow \mathbb R$ such that $\ell(X(x_0))\neq 0$. Then the real-valued function $\tau\longmapsto \ell(\Phi_\tau(x_0))$ has nonzero derivative at $\tau=0$ and is therefore strictly monotone on a sufficiently small interval about $0$. Hence $\Phi_\tau(x_0) \neq x_0 $ for every sufficiently small $\tau\neq 0$ and thus $\Phi_\tau\neq \operatorname{id}$. Fix such a $\tau \neq 0$ with $h_{\tau}\in \mathcal{W}$. We now show that $h_{\tau}$ is not affine. 
    
    Suppose that $h_\tau$ is affine. Since both $h_\tau$ and $f$ are affine, their difference is affine.. But $h_{\tau} -f$ vanishes on the nonempty open set $\mathbb{R}^n\setminus f^{-1}(K)$. An affine map which vanishes on a nonempty open set vanishes identically. Thus $h_{\tau}=f$ everywhere. From $h_\tau=\Phi_\tau\circ f$ and the surjectivity of $f$, this implies $\Phi_\tau=\operatorname{id}$, which contradicts with the choice of $\tau$. Therefore $h_{\tau}$ is not affine.

    Finally, since $ h_\tau\in\mathcal W \subset \mathcal V\cap\mathcal U,$ we have $h_\tau\in\mathcal V$, and Theorem~\ref{thm:whit} implies that $h_\tau$ is Anosov with respect to $\hat g$ and has the same stable index as $f$.
    \end{proof}


\noindent \textbf{Z. Li}\\
School of Mathematics (Zhuhai)\\
Sun Yat-Sen University \\
519082, PR China\\
E-mail: lizixu@mail.sysu.edu.cn\\
\ \\
\noindent \textbf{A. Rojas}\\
Hangzhou International Innovation Institute of Beihang University, \\
Hangzhou, 311115, China.\\
Email: tchibatze@gmail.com\\
\ \\
\textbf{S. Roma\~na}\\
School of Mathematics (Zhuhai)\\
Sun Yat-Sen University \\
519082, PR China\\
E-mail: sergio@mail.sysu.edu.cn\\

\end{document}